\providecommand{\U}[1]{\protect \rule{.1in}{.1in}}
\newtheorem{theorem}{Theorem}
\newtheorem{corollary}[theorem]{Corollary}
\newtheorem{definition}[theorem]{Definition}
\newtheorem{example}[theorem]{Example}
\newtheorem{lemma}[theorem]{Lemma}
\newtheorem{proposition}[theorem]{Proposition}
\newtheorem{remark}[theorem]{Remark}
\newenvironment{proof}[1][Proof]{\noindent \textbf{#1.} }{\  \rule{0.5em}{0.5em}}
\begin{document}

\title{ Nest algebras in an arbitrary vector space}
\author{~Don Hadwin\thanks{ University of New Hampshire, NH., USA } and K.J.
Harrison\thanks{ Murdoch University, WA., Australia } }
\maketitle

\begin{abstract}
We examine the properties of~algebras of linear transformations that leave
invariant all subspaces in a~totally ordered lattice of subspaces of an
arbitrary vector space. We compare our results with those that apply for the
corresponding algebras of bounded operators that act on~a Hilbert space.

\end{abstract}

\section{Introduction \label{SecIntro}}

The study of triangular forms for operators has long been an important part of
the theory of non-self-adjoint operators and operator algebras. See \cite{KRD}
for a detailed account. In \cite{Ring1} Ringrose introduced the terms `nest'
and `nest algebra'. For Ringrose a~nest $\mathfrak{N}$ is a complete, totally
ordered sublattice of the lattice of all closed subspaces of a Hilbert space
$\mathfrak{H}$ that contains the trivial subspaces $\{0\}$ and $\mathfrak{H}$.
The corresponding nest algebra $\operatorname*{Alg} \mathfrak{N}$ is algebra
of all operators on $\mathfrak{H}$ that leave invariant each of the subspaces
in $\mathfrak{N}$.

In this paper we examine~totally ordered lattices of subspaces of an arbitrary
vector space and the associated operator algebras. Here a nest $\mathfrak{N}$
in a vector space $\mathfrak{X}$ is a complete, totally ordered sublattice of
the lattice of all subspaces of $\mathfrak{X}$ that contains the trivial
subspaces $\{0\}$ and $\mathfrak{X}$. The corresponding nest algebra
$\operatorname*{Alg} \mathfrak{N}$ is algebra of all operators on
$\mathfrak{X}$ that leave invariant each of the subspaces in $\mathfrak{N}$.
We obtain results concerning the finite rank operators in~$\operatorname*{Alg}
\mathfrak{N}$ that mirror those that apply in the Hilbert space case. We also
examine the Jacobson radical of~$\operatorname*{Alg} \mathfrak{N}$ and obtain
a simple characterization when the nest satisfies a descending chain
condition. We also show~that the same characterization of the Jacobson radical
holds for other types of nest algebras.

\subsection{ Complete distributivity}

The lattice operations $\wedge$ and $\vee$in~$\mathcal{S}(\mathfrak{X})$,~ the
lattice of all subspaces of the vector space $\mathfrak{X}$, are
intersection~and linear span. In particular, if~$\mathcal{M}$ and~$\mathcal{N}%
$ are subspaces of~$\mathfrak{X}$,~$\mathcal{M} \vee \mathcal{N}
=\operatorname*{span} \{ \mathcal{M} ,\mathcal{N}\} =\{x +y :x \in \mathcal{M}
,y \in \mathcal{N}\}$. However in a totally ordered sublattice the lattice
operations are simply the set operations~ $\cap$ and~$\cup$. So any nest
$\mathfrak{N}$ is completely distributive (see \cite{KRD}).

Suppose that $\mathfrak{N}$ is a nest in $\mathfrak{X}$. For each $x
\in \mathfrak{X}$ we define%
\begin{equation}
\mathfrak{N}(x) =\bigcap \{ \mathcal{M} \in \mathfrak{N} :x \in \mathcal{M}%
\} \  \text{and}\  \mathfrak{N}(x)_{ -} =\bigcup \{ \mathcal{M} \in \mathfrak{N} :x
\notin \mathfrak{N}\} .\label{atom}%
\end{equation}
It follows easily from (\ref{atom}) that
\begin{equation}
x \in \mathcal{N} \iff \mathfrak{N}(x) \subseteq \mathcal{N}\  \text{and}\ x
\notin \mathcal{N} \iff \mathcal{N} \subseteq \mathfrak{N}(x)_{ -}%
\label{granular}%
\end{equation}

\begin{lemma}
The join-irreducible elements of the completely distributive lattice
$\mathfrak{N}$ are the subspaces of the form $\mathfrak{N}(x)$ where $x$ is
any non-zero vector in $\mathfrak{X}$.
\end{lemma}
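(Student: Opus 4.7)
The plan is to prove the two inclusions of the claimed equality of sets directly, using the characterization (\ref{granular}) and the fact that in the totally ordered lattice $\mathfrak{N}$ the join of any family is just the (set-theoretic) union of its members. Throughout, I will take ``join-irreducible'' to mean: $\mathcal{N} \in \mathfrak{N}$ is join-irreducible if $\mathcal{N} \neq \{0\}$ and, whenever $\mathcal{N} = \bigvee \mathcal{F}$ for a family $\mathcal{F} \subseteq \mathfrak{N}$, we have $\mathcal{N} \in \mathcal{F}$.

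For the first direction, I would fix a nonzero $x \in \mathfrak{X}$ and suppose $\mathfrak{N}(x) = \bigvee \mathcal{F}$ for some $\mathcal{F} \subseteq \mathfrak{N}$. Since the lattice is totally ordered and complete, the join equals $\bigcup \mathcal{F}$, so $x$ lies in some $\mathcal{M} \in \mathcal{F}$. Applying the left equivalence in (\ref{granular}) gives $\mathfrak{N}(x) \subseteq \mathcal{M}$, and the reverse inclusion is immediate because $\mathcal{M} \subseteq \bigvee \mathcal{F} = \mathfrak{N}(x)$. Hence $\mathcal{M} = \mathfrak{N}(x)$, so $\mathfrak{N}(x) \in \mathcal{F}$.

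For the converse, suppose $\mathcal{N} \in \mathfrak{N}$ is join-irreducible, so in particular $\mathcal{N} \neq \{0\}$. For each nonzero $x \in \mathcal{N}$ the left part of (\ref{granular}) gives $\mathfrak{N}(x) \subseteq \mathcal{N}$, and since every such $x$ lies in $\mathfrak{N}(x)$ we get
\begin{equation*}
\mathcal{N} \;=\; \bigcup \{ \mathfrak{N}(x) : 0 \neq x \in \mathcal{N}\} \;=\; \bigvee \{ \mathfrak{N}(x) : 0 \neq x \in \mathcal{N}\},
\end{equation*}
the second equality again using that joins in a totally ordered complete sublattice agree with unions. Join-irreducibility then forces $\mathcal{N} = \mathfrak{N}(x)$ for some nonzero $x \in \mathcal{N}$, completing the proof.

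The only real subtlety is making sure one applies the correct interpretation of ``join'': one must not confuse the lattice join in $\mathfrak{N}$ with the linear span in $\mathcal{S}(\mathfrak{X})$. The introduction already flags this issue, and total ordering of $\mathfrak{N}$ is precisely what makes the two agree on families in $\mathfrak{N}$, so no further work is needed beyond citing this observation. Everything else is a routine application of (\ref{granular}).
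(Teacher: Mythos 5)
Your proof is correct and follows essentially the same route as the paper: the forward direction applies (\ref{granular}) to locate $x$ in a single member of the family and then pull $\mathfrak{N}(x)$ inside it, and the converse writes a join-irreducible $\mathcal{N}$ as the union (= join) of the $\mathfrak{N}(x)$ for $x \in \mathcal{N}$. Your explicit handling of the trivial subspace and of the equivalence between ``$\mathcal{N} \subseteq \mathcal{M}$ for some member'' and ``$\mathcal{N}$ belongs to the family'' is a minor tidying of the same argument, not a different approach.
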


\begin{proof}
Suppose that $x \neq0$, and that $\mathfrak{N}(x) \subseteq \bigcup \{N :N
\in \mathfrak{N}^{\#}\}$ where $\mathfrak{N}^{\#} \subseteq \mathfrak{N}$. Then
$x \in \bigcup \{N :N \in \mathfrak{N}^{\#}\}$ by (\ref{granular}). So $x \in N$
for some~$N \in \mathfrak{N}^{\#}$, and it follows from (\ref{granular}) that
$\mathfrak{N}(x) \subseteq N$. So $\mathfrak{N}(x)$ is join-irreducible.

Suppose now that~$N$ is a join-irreducible subspace in $\mathfrak{N}$.
Clearly~$\mathcal{N} =\bigcup \{ \mathfrak{N}(x) :x \in \mathcal{N}\} ,$ and
so~$\mathcal{N} \subseteq \mathfrak{N}(x)$ for some~$x \in \mathcal{N} .$
So~$\mathcal{N} =\mathfrak{N}(x)$.
\end{proof}

\begin{remark}
Complete distributivity distinguishes the vector space case from the Hilbert
space case. Some of~the most interesting nests of closed subspaces of a
Hilbert space are `continuous', have no join-irreducible elements, and are not
completely distributuve.~
\end{remark}

\section{Finite rank operators}

The \emph{rank} of an operator in~$\mathcal{L}(\mathfrak{X})$ is~the dimension
of its range.\ In this section we examine the properties of~operators in a
nest algebra $\mathcal{A} =\operatorname*{Alg} \mathfrak{N}$ whose ranks are
finite. Let $\mathcal{R}$ denote the set of finite-rank operators
in~$\mathcal{L}(\mathfrak{X})$. Various authors have investigated the
properties of $\mathcal{R} \cap \mathcal{A}$~in the Hilbert space context. For
example, Erdos proved \cite{Erd1} that if~$\mathcal{N}$ is a nest of closed
subspaces of a Hilbert space then the strong closure of $\mathcal{R}
\cap \mathcal{A}$ is $\mathcal{A}$.

Rank-one operators also have an important role in the Hilbert space context.
Suppose that $T \in \mathcal{R}_{1}$, where~$\mathcal{R}_{1}$ denotes the set
of all rank-one operators in~$\mathcal{L} (\mathfrak{X})$. Then there exists
$y \in \mathfrak{X}$ such that for all $x \in \mathfrak{X}$, $T x =\varphi(x) y$
where $\varphi(x) \in \mathbb{F}$. Since $T$ is linear the map $x
\rightarrow \varphi(x)$ is a linear functional of $\mathfrak{X}$. Let
$\mathfrak{X}^{ \prime}$ denote the algebraic dual of $\mathfrak{X}$, i.e. the
set of all linear maps from $\mathfrak{X}$ into $\mathbb{F}$. Each~rank-one
operator on $\mathfrak{X}$ has the form $x \otimes \varphi$, where $x
\in \mathfrak{X}$, $\varphi \in \mathfrak{X}^{ \prime}$, and $(x \otimes \varphi)
(y) =\varphi(y) x$ for all~$y \in \mathfrak{X}$.

The following lemma characterizes the rank-one operators in~$\mathcal{A} .$

\begin{lemma}
\label{rank1inA}Suppose that~$x \in \mathcal{X}$ and $\varphi \in \mathcal{X}^{
\prime} .$ Then~$x \otimes \varphi \in \mathcal{R}_{1} \cap \mathcal{A}$ if and
only if~$\mathfrak{N}_{ -}(x) \subseteq \ker \varphi.$

\begin{proof}
First suppose that~$x \otimes \varphi \in \mathcal{R}_{1} \cap \mathcal{A} ,$ and
that $y \in \mathfrak{N}_{ -}(x)$. Since~$\mathfrak{N}_{ -}(x) \in \mathcal{A}
,$ $(x \otimes \varphi)(y) =\varphi(y)x \in \mathfrak{N}_{ -}(x)$. Since $x
\notin$$\mathfrak{N}_{ -}(x)$ it follows that $\varphi(y) =0$.
So~$\mathfrak{N}_{ -}(x) \subseteq \ker \varphi$.

Now suppose that~$\mathfrak{N}_{ -}(x) \subseteq \ker \varphi$ and that~$N
\in \mathfrak{N}$. If~$N \subset \mathfrak{N}(x)$ then~$N \subseteq \mathcal{N}_{
-}(x)$ and $(x \otimes \varphi)N =\{0\} \subseteq N$. If~$\mathfrak{N}(x)
\subseteq N$ then $(x \otimes \varphi)N =\operatorname*{span} x \subseteq
\mathfrak{N}(x) \subseteq N .$ So $x \otimes \varphi \in \mathcal{R}_{1}
\cap \mathcal{A} .$
\end{proof}
\end{lemma}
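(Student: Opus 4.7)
The plan is to reduce the question to checking a single invariance condition on each element of $\mathfrak{N}$, then exploit the total ordering of the nest to dichotomize those elements into two easily handled cases. Concretely, $x\otimes\varphi\in\mathcal{A}$ means $(x\otimes\varphi)(N)\subseteq N$ for every $N\in\mathfrak{N}$, and by (\ref{granular}) each such $N$ either satisfies $N\subseteq\mathfrak{N}_-(x)$ (the case $x\notin N$) or $\mathfrak{N}(x)\subseteq N$ (the case $x\in N$). So the single functional $\varphi$ needs only to vanish on the ``lower half'' of the nest, i.e.\ on $\mathfrak{N}_-(x)$.

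For the forward implication I would first observe that $\mathfrak{N}_-(x)$ itself lies in $\mathfrak{N}$, since by completeness the union of the nest elements not containing $x$ is again a member of $\mathfrak{N}$. Therefore invariance of $\mathfrak{N}_-(x)$ under $x\otimes\varphi$ means that for every $y\in\mathfrak{N}_-(x)$ the vector $\varphi(y)x$ lies in $\mathfrak{N}_-(x)$; since $x\notin\mathfrak{N}_-(x)$ by definition, this forces $\varphi(y)=0$, giving $\mathfrak{N}_-(x)\subseteq\ker\varphi$.

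For the converse, assuming $\mathfrak{N}_-(x)\subseteq\ker\varphi$, I would verify invariance of an arbitrary $N\in\mathfrak{N}$ by splitting along the dichotomy above. If $N\subseteq\mathfrak{N}_-(x)$, then $\varphi$ is zero on $N$, so $(x\otimes\varphi)(N)=\{0\}\subseteq N$. If instead $\mathfrak{N}(x)\subseteq N$, then $(x\otimes\varphi)(N)\subseteq\mathbb{F}x\subseteq\mathfrak{N}(x)\subseteq N$. Either way $N$ is invariant, so $x\otimes\varphi\in\mathcal{A}$.

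I don't anticipate any real technical obstacle: the only non-formal ingredient is the observation that the total ordering of $\mathfrak{N}$ together with (\ref{granular}) leaves no ``intermediate'' case between $N\subseteq\mathfrak{N}_-(x)$ and $\mathfrak{N}(x)\subseteq N$. The mildest subtlety is confirming that $\mathfrak{N}_-(x)\in\mathfrak{N}$, which is where the completeness of the nest is silently used.
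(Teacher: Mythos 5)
Your proof is correct and follows essentially the same route as the paper's: the forward direction uses invariance of $\mathfrak{N}(x)_{-}$ itself together with $x\notin\mathfrak{N}(x)_{-}$, and the converse splits an arbitrary $N\in\mathfrak{N}$ into the two cases $N\subseteq\mathfrak{N}(x)_{-}$ and $\mathfrak{N}(x)\subseteq N$. Your explicit remark that $\mathfrak{N}(x)_{-}\in\mathfrak{N}$ by completeness is a point the paper uses only implicitly, but the argument is the same.
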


\subsection{Reflexivity of $\mathfrak{N}$}

For any subset of~$\mathfrak{A}$ of $\mathcal{L}(\mathfrak{X})$ let
$\operatorname*{Lat} $$\mathfrak{A}$ denote the sublattice of $\mathcal{S}%
(\mathfrak{X})$ consisting of all subspaces of $\mathfrak{X}$ that are
invariant under each of the operators in $\mathfrak{A}$. We shall show that
\begin{equation}
\mathfrak{N} =\operatorname*{Lat} (\mathcal{R}_{1} \cap \mathcal{A})
,\label{rank1ref}%
\end{equation}
from which it follows that $\mathfrak{N}$ is reflexive, i.e. $\mathfrak{N}
=\operatorname*{Lat} \operatorname*{Alg} \mathfrak{N} .$

Longstaff shows in (\cite{Long1}) that (\ref{rank1ref}) holds in the Hilbert
space context.

The following lemma will be used to establish the reflexivity of
$\mathfrak{N}$.

\begin{lemma}
\label{onto}If $x$ and $y$ are non-zero vectors in~$\mathfrak{X}$ and~$y
\in \mathfrak{N}(x)$, then there exists $R \in \mathcal{R}_{1} \cap \mathcal{A}$
such that $Rx =y .$

\begin{proof}
Since $y \in \mathfrak{N}(x) ,$~$\mathfrak{N}(y)_{ -} \subset \mathfrak{N}(y)
\subseteq \mathfrak{N}(x) .$ So~$x \notin \mathfrak{N}(y)_{ -}$, and hence~there
exists $\varphi \in X^{ \prime}$ such that $\varphi(x) =1$ and~$\mathfrak{N}%
(y)_{\_} \subseteq \ker \varphi.$ Then $R =y \otimes \varphi \in \mathcal{R}_{1}
\cap \mathcal{A}$ and~$Rx =\varphi(x)y .$
\end{proof}
\end{lemma}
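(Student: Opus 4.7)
The plan is to look for $R$ of the form $y \otimes \varphi$ with $\varphi \in \mathfrak{X}'$ and $\varphi(x)=1$; for any such $\varphi$ we automatically get $Rx=\varphi(x)y=y$, and by Lemma~\ref{rank1inA} the operator $R$ lies in $\mathcal{R}_1\cap\mathcal{A}$ exactly when $\mathfrak{N}(y)_-\subseteq\ker\varphi$. So the whole problem is to produce a linear functional on $\mathfrak{X}$ that annihilates $\mathfrak{N}(y)_-$ and sends $x$ to $1$, and for this it suffices to verify that $x\notin\mathfrak{N}(y)_-$.

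The key step is exactly this verification, and it rests on (\ref{granular}) together with completeness of $\mathfrak{N}$. Because $\mathfrak{N}$ is a complete, totally ordered sublattice, $\mathfrak{N}(y)_-=\bigcup\{\mathcal{M}\in\mathfrak{N}:y\notin\mathcal{M}\}$ is itself an element of $\mathfrak{N}$, and by construction $y\notin\mathfrak{N}(y)_-$. From the hypothesis $y\in\mathfrak{N}(x)$ and the first half of (\ref{granular}), $\mathfrak{N}(y)\subseteq\mathfrak{N}(x)$. If one had $x\in\mathfrak{N}(y)_-$, another application of (\ref{granular}) would give $\mathfrak{N}(x)\subseteq\mathfrak{N}(y)_-$, hence $\mathfrak{N}(y)\subseteq\mathfrak{N}(y)_-$, contradicting $y\in\mathfrak{N}(y)\setminus\mathfrak{N}(y)_-$. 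Therefore $x\notin\mathfrak{N}(y)_-$.

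With that in hand, the rest is routine linear algebra: define $\varphi$ on the subspace $\mathfrak{N}(y)_-\oplus\mathbb{F}x$ by $\varphi(u+\lambda x)=\lambda$ (well-defined since $x\notin\mathfrak{N}(y)_-$), and extend it to a linear functional on all of $\mathfrak{X}$ by picking an algebraic complement (equivalently, extend a Hamel basis). Setting $R=y\otimes\varphi$, Lemma~\ref{rank1inA} gives $R\in\mathcal{R}_1\cap\mathcal{A}$, and $Rx=\varphi(x)y=y$.

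The main obstacle is really just the first lattice-theoretic step, namely ruling out $x\in\mathfrak{N}(y)_-$; it is where the hypothesis $y\in\mathfrak{N}(x)$ is used and where one has to be careful to notice that $\mathfrak{N}(y)_-$ belongs to $\mathfrak{N}$ so that (\ref{granular}) applies to it. The functional-extension step needs no topology and, in particular, does not require a Hahn--Banach-type hypothesis because we are working in an abstract vector space.
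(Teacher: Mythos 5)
Your proposal is correct and follows essentially the same route as the paper: establish $x\notin\mathfrak{N}(y)_{-}$ from $y\in\mathfrak{N}(x)$, choose $\varphi\in\mathfrak{X}^{\prime}$ with $\varphi(x)=1$ and $\mathfrak{N}(y)_{-}\subseteq\ker\varphi$, and take $R=y\otimes\varphi$. You simply make explicit two steps the paper leaves terse, namely that $\mathfrak{N}(y)_{-}$ lies in $\mathfrak{N}$ by completeness so that (\ref{granular}) applies to it, and the elementary extension argument producing $\varphi$.
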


\begin{theorem}
\label{reflexlem}$\mathfrak{N}$ is reflexive.

\begin{proof}
We shall show that $\mathfrak{N} =$$\operatorname*{Lat} (\mathcal{R}_{1}
\cap \mathcal{A})$. Clearly $\mathfrak{N} \subseteq \operatorname*{Lat}
(\mathcal{R}_{1} \cap \mathcal{A})$.~Suppose that~$N \in \operatorname*{Lat}
(\mathcal{R}_{1} \cap \mathcal{A})$. It is enough to show that~$N
\in \mathfrak{N}$.

Suppose that $x$ and $y$ are non-zero vectors in~$N$ and~$\mathfrak{N}(x)$
respectively. So by Lemma \ref{onto} there exists $R \in \mathcal{R}_{1}
\cap \mathcal{A}$ such that $Rx =y .$ Since~$N \in \operatorname*{Lat}
(\mathcal{R}_{1} \cap \mathcal{A})$, it follows that~$y \in N$, and
hence~$\mathfrak{N}(x) \subseteq N$.

Clearly~$N \subseteq \bigcup \{ \mathfrak{N}(x) :x \in N\}$, and so%
\[
N \subseteq \bigcup \{ \mathfrak{N}(x) :x \in N\} \subseteq N
\]
So~$N =\bigcup \{ \mathfrak{N}(x) :x \in N\} \in \mathfrak{N}$, as required.
\end{proof}
\end{theorem}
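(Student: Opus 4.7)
My plan is to establish the stronger identity $\mathfrak{N} = \operatorname{Lat}(\mathcal{R}_{1} \cap \mathcal{A})$, from which reflexivity follows immediately: the inclusions $\mathcal{R}_{1} \cap \mathcal{A} \subseteq \operatorname{Alg}\mathfrak{N}$ and $\mathfrak{N} \subseteq \operatorname{Lat}\operatorname{Alg}\mathfrak{N}$ give
\[
\mathfrak{N} \subseteq \operatorname{Lat}\operatorname{Alg}\mathfrak{N} \subseteq \operatorname{Lat}(\mathcal{R}_{1} \cap \mathcal{A}) = \mathfrak{N}.
\]

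The easy inclusion is $\mathfrak{N} \subseteq \operatorname{Lat}(\mathcal{R}_{1} \cap \mathcal{A})$, which is just the definition of $\mathcal{A} = \operatorname{Alg}\mathfrak{N}$ restricted to rank-one elements. For the reverse inclusion, I would fix $N \in \operatorname{Lat}(\mathcal{R}_{1} \cap \mathcal{A})$ and aim to show $N \in \mathfrak{N}$ by expressing $N$ as a union of join-irreducibles of $\mathfrak{N}$.

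The heart of the argument uses Lemma \ref{onto}. Given any nonzero $x \in N$ and any nonzero $y \in \mathfrak{N}(x)$, Lemma \ref{onto} produces $R \in \mathcal{R}_{1} \cap \mathcal{A}$ with $Rx = y$; since $N$ is invariant under $R$, we get $y \in N$, and therefore $\mathfrak{N}(x) \subseteq N$ for every $x \in N$. Combining this with the trivial inclusion $N \subseteq \bigcup\{\mathfrak{N}(x) : x \in N\}$ yields the equality $N = \bigcup\{\mathfrak{N}(x) : x \in N\}$.

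The only remaining point — and the step most likely to cause trouble if one is not careful — is to verify that this union actually belongs to $\mathfrak{N}$. Since $\mathfrak{N}$ is totally ordered, the subcollection $\{\mathfrak{N}(x) : x \in N\}$ is a chain, and by completeness of $\mathfrak{N}$ its supremum lies in $\mathfrak{N}$; because unions of chains in $\mathcal{S}(\mathfrak{X})$ coincide with the lattice join inside a totally ordered sublattice (as noted in Section~1.1), this supremum is precisely the set-theoretic union. Hence $N \in \mathfrak{N}$, completing the proof.
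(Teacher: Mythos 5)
Your proposal is correct and follows essentially the same route as the paper: establish $\mathfrak{N}=\operatorname{Lat}(\mathcal{R}_{1}\cap\mathcal{A})$, use Lemma \ref{onto} to show $\mathfrak{N}(x)\subseteq N$ for each nonzero $x\in N$, and conclude $N=\bigcup\{\mathfrak{N}(x):x\in N\}\in\mathfrak{N}$. Your closing paragraph in fact supplies a justification (total ordering plus completeness make the union of the chain $\{\mathfrak{N}(x):x\in N\}$ an element of $\mathfrak{N}$) that the paper leaves implicit, which is a welcome addition.
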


\subsection{ Finite rank idempotents}

A simple calculation shows that $(x_{1} \otimes \varphi_{1})(x_{2}
\otimes \varphi_{2}) =\varphi_{1}(x_{2})(x_{1} \otimes \varphi_{2})$. So $x
\otimes \varphi$ is idempotent if and only if~$\varphi(x) =1$.

The following lemma concerning rank-one idempotents in $\mathcal{A}$ will be useful.

\begin{lemma}
\label{idemdecomp}Suppose that~$M$ is a finite-dimensional subspace
of~$\mathfrak{X}$. Then~$M =\operatorname*{ran} P$ for some idempotent $P
\in \mathcal{A}$. Furthermore,~$P$ is the sum of~$n$ rank-one idempotents in
$\mathcal{A}$, where $n =\dim M$.

\begin{proof}
The proof is by induction on $\dim M$. First suppose that~$\dim M =1$, and
choose a non-zero vector $x \in M$. Now choose~$\varphi \in \mathfrak{X}^{
\prime}$ such that~$\mathfrak{N}(x)_{ -} \subseteq \ker \varphi$ and~$\varphi(x)
=1$. Such a $\varphi$ exists because $x \notin \mathfrak{N}(x)_{ -}$.Then~$x
\otimes \varphi$ is the required idempotent.

Now suppose that~$n =\dim M >1$ and that the result is true for all subspaces
of~$\mathfrak{X}$ with dimension less than~$n$. Choose a non-zero vector~$y
\in M$ and a subspace~$M^{\#}$ of~$M$ such that~$M^{\#}$ and
$\operatorname*{span} y$ are complementary subspaces of~$M$, i.e.~$M =M^{\#}
+\operatorname*{span} y =M$~ and~$M^{\#} \cap \operatorname*{span} y =\{0\}$.
By the induction hypothesis there exists an idempotent $P^{\#} \in \mathcal{A}$
such that $\operatorname*{ran} P^{\#} =M^{\#}$, and rank-one
idempotents~$P_{1} ,P_{2} , \cdots,P_{n -1}$ in~$\mathcal{A}$ such
that~$P^{\#} =P_{1} +P_{2} + \cdots+P_{n -1}$. Let~$x =y -P^{\#}y$. Then $0
\neq x \in M$ and $P^{\#}x =0$. Suppose that $x =u +v$, where $u
\in \mathfrak{N}(x)_{ -}$ and~$v \in M^{\#}$.~ Then~$P^{\#}x =P^{\#}u +P^{\#}%
v$, i.e.~$0 =P^{\#}u +v$, since~$M^{\#} =\operatorname*{ran} P^{\#}$
and~$P^{\#}$ is idempotent. So~$x =u -P^{\#}u$. Since~$P^{\#} \in \mathcal{A}$
it follows that $u -P^{\#}u \in \mathcal{N}(x)_{ -}$. Since~$x \notin
\mathcal{N}(x)_{ -}$ we have~a contradiction. So~$x \notin \mathfrak{N}(x)_{ -}
+M^{\#} =\mathfrak{N}(x)_{ -} +\operatorname*{ran} P^{\#}$, and hence there
exists~$\varphi \in \mathfrak{X}^{ \prime}$ such that~%
\[
\varphi(x) =1 ,\  \  \text{and}\  \  \mathfrak{N}(x)_{ -} +\operatorname*{ran}
P^{\#} \subseteq \ker \varphi \  \  \
\]

Let $P_{n} =x \otimes \varphi$. Then $P_{n}$ is idempotent since~$\varphi(x)
=1$, and $P_{n} \in \mathcal{A}$ since~$\mathfrak{N}(x)_{ -} \subseteq
\ker \varphi$. Furthermore,~ $P^{\#}P_{n} =P^{\#}x \otimes \varphi=0 ,$ and
$P_{n}P^{\#} =x \otimes \varphi P^{\#} =0$ since $\operatorname*{ran} P^{\#}
\subseteq \ker \varphi.$ Now let~$P =P^{\#} +P_{n}$. Then
\[
P^{2} =(P^{\#})^{2} +P^{\#}P_{n} +P_{n}P^{\#} +P_{n}^{2} =P^{\#} +P_{n} =P ,
\]
and $\operatorname*{ran} P =\operatorname*{ran} P^{\#} +\operatorname*{ran}
P_{n} =M^{\#} +\operatorname*{span} x =M$, as required.
\end{proof}
\end{lemma}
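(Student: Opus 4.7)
The plan is to induct on $n = \dim M$. For the base case $n = 1$, I pick any non-zero $x \in M$ and note that $x \notin \mathfrak{N}(x)_{-}$ by (\ref{granular}), so extending a basis of $\mathfrak{N}(x)_{-}$ to a Hamel basis of $\mathfrak{X}$ that contains $x$ produces a $\varphi \in \mathfrak{X}^{\prime}$ with $\varphi(x) = 1$ and $\mathfrak{N}(x)_{-} \subseteq \ker \varphi$. By Lemma \ref{rank1inA}, $P := x \otimes \varphi$ is then a rank-one idempotent in $\mathcal{A}$ whose range is $M$.

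For the inductive step, assuming the result for all dimensions less than $n$, I pick a non-zero $y \in M$ and write $M = M^{\#} \oplus \operatorname*{span} y$ with $\dim M^{\#} = n-1$. The induction hypothesis supplies an idempotent $P^{\#} = P_{1} + \cdots + P_{n-1} \in \mathcal{A}$ with $\operatorname*{ran} P^{\#} = M^{\#}$, each $P_{i}$ a rank-one idempotent in $\mathcal{A}$. The goal is to produce one more rank-one idempotent $P_{n} = x \otimes \varphi \in \mathcal{A}$ satisfying $P^{\#} P_{n} = P_{n} P^{\#} = 0$ so that $P := P^{\#} + P_{n}$ is an idempotent in $\mathcal{A}$ whose range contains $M^{\#}$ together with a vector outside $M^{\#}$. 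The natural candidate is $x := y - P^{\#} y$, which lies in $M$, is non-zero (else $y \in M^{\#}$, contradicting complementarity), and satisfies $P^{\#} x = 0$; the latter forces $P^{\#} P_{n} = 0$ automatically for any $\varphi$.

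The remaining $\varphi$ must satisfy three conditions: $\varphi(x) = 1$ (to make $P_{n}$ idempotent), $\mathfrak{N}(x)_{-} \subseteq \ker \varphi$ (to place $P_{n}$ in $\mathcal{A}$ via Lemma \ref{rank1inA}), and $M^{\#} \subseteq \ker \varphi$ (to force $P_{n} P^{\#} = 0$). The main obstacle is therefore the separation claim $x \notin \mathfrak{N}(x)_{-} + M^{\#}$, which is exactly what permits such a $\varphi$ to exist by a routine basis-extension. To establish it, suppose $x = u + v$ with $u \in \mathfrak{N}(x)_{-}$ and $v \in M^{\#} = \operatorname*{ran} P^{\#}$; applying $P^{\#}$ yields $0 = P^{\#} u + v$, so $v = -P^{\#} u$. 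Since $P^{\#} \in \mathcal{A}$ leaves $\mathfrak{N}(x)_{-}$ invariant, $P^{\#} u \in \mathfrak{N}(x)_{-}$, whence $x = u - P^{\#} u \in \mathfrak{N}(x)_{-}$, contradicting $x \notin \mathfrak{N}(x)_{-}$. With the separation in hand, $\varphi$ is obtained, and a direct check using the two cross-term vanishings gives $P^{2} = P$, $\operatorname*{ran} P = M^{\#} + \operatorname*{span} x = M$, and the decomposition $P = P_{1} + \cdots + P_{n-1} + P_{n}$ into $n$ rank-one idempotents of $\mathcal{A}$.
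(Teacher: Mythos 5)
Your proposal is correct and follows essentially the same route as the paper's own proof: the same base case via a basis extension, the same choice of $x = y - P^{\#}y$, the same contradiction argument showing $x \notin \mathfrak{N}(x)_{-} + M^{\#}$, and the same verification that the cross terms $P^{\#}P_{n}$ and $P_{n}P^{\#}$ vanish. No gaps.
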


\subsection{Rank decomposition}

Lemma \ref{idemdecomp} provides an easy proof of a rank-decomposition property
of finite rank operators in the nest algebra $\mathcal{A} .$

\begin{theorem}
\label{rnkdecomp}Suppose that $T$ is a finite rank operator in $\mathcal{A}$.
Then $T$ is the sum of $n$ rank-one operators in $\mathcal{A}$, where $n
=\operatorname*{rank} T .$

\begin{proof}
By Lemma \ref{idemdecomp}, $\operatorname*{ran} T =\operatorname*{ran} P$ for
some idempotent $P$ in $\mathcal{A}$.~ Furthermore $P =P_{1} +P_{2} +
\cdots+P_{n}$ where each $P_{k}$ is a rank-one idempotent in $\mathcal{A}$.
Let $T_{k} =P_{k}T$ for $1 \leq k \leq n$. Then $T_{k} \in \mathcal{A}$ and
$\operatorname*{rank} T_{k} \leq1$ for each $k$. Furthermore,
\[
T =PT =\sum_{k =1}^{n}P_{k}T =\sum_{k =1}^{n}T_{k} .
\]
This is the required decomposition. The Hilbert version of this result was
proved by Ringrose (\cite{Erd1}).
\end{proof}
\end{theorem}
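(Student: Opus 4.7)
The plan is to apply Lemma~\ref{idemdecomp} to $M:=\operatorname{ran} T$ and leverage the fact that an idempotent whose range contains $\operatorname{ran} T$ acts as the identity on $\operatorname{ran} T$. Since $T$ has rank $n$, $M$ is a finite-dimensional subspace of $\mathfrak{X}$ with $\dim M=n$. Lemma~\ref{idemdecomp} then yields an idempotent $P\in\mathcal{A}$ with $\operatorname{ran} P=M$, together with a decomposition $P=P_{1}+\cdots+P_{n}$ into rank-one idempotents $P_{k}\in\mathcal{A}$.

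The key observation is then that $PT=T$: indeed $\operatorname{ran} T\subseteq\operatorname{ran} P$, and $P$ restricts to the identity on its range, so $P(Tx)=Tx$ for every $x\in\mathfrak{X}$. Consequently,
\[
T=PT=\sum_{k=1}^{n}P_{k}T=\sum_{k=1}^{n}T_{k},
\]
where $T_{k}:=P_{k}T$. Each $T_{k}$ is a product of operators in the algebra $\mathcal{A}$, so lies in $\mathcal{A}$, and $\operatorname{rank} T_{k}\leq\operatorname{rank} P_{k}=1$.

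To finish, I would note that each $T_{k}$ is genuinely of rank one rather than zero: by subadditivity of rank, $n=\operatorname{rank} T\leq\sum_{k}\operatorname{rank} T_{k}\leq n$, forcing equality throughout, so each $T_{k}$ has rank exactly one. All of the substantive content was already packaged into Lemma~\ref{idemdecomp}; the present argument amounts to bookkeeping, and I do not anticipate any real obstacle beyond spotting the identity $PT=T$.
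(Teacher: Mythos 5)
Your argument is the same as the paper's: apply Lemma~\ref{idemdecomp} to $\operatorname{ran}T$, use $PT=T$, and split $T$ as $\sum_k P_kT$. The only difference is that you additionally verify via subadditivity of rank that each $T_k$ has rank exactly one (the paper only notes $\operatorname{rank}T_k\leq 1$), which is a small but worthwhile tightening since the statement promises $n$ rank-one summands.
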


\begin{remark}
The proof of Theorem \ref{rnkdecomp} is easily modified to show that if $T$ is
a finite rank operator in $\mathcal{I}$, where $\mathcal{I}$ is a left ideal
in $\mathcal{A}$, then $T$ is the sum of $n$ rank-one operators in
$\mathcal{I}$, where $n =\operatorname*{rank} T .$
\end{remark}

\subsection{Density}

Lemma \ref{idemdecomp} also provides an easy proof of a density property of
the linear span of rank-one operators in $\mathcal{A} .$ First we introduce a
special topology on $\mathcal{L}(\mathfrak{X})$.

\begin{definition}
The set of all subsets of $\mathcal{L}(\mathfrak{X})$ of the form
\[
\mathcal{U}(T ,x) =\{S \in \mathcal{L}(\mathfrak{X}) :Sx =Tx\  \} ,\text{ }%
\]
where $x \in \mathfrak{X}$~and $T \in \mathcal{L}(\mathfrak{X})$, is a set of
subbasic neighbourhoods of $T$ for~the strict topology on $\mathcal{L}%
(\mathfrak{X})$
\end{definition}

\begin{theorem}
\label{strictd}The~span of the rank-one operators in $\mathcal{A}$ is strictly
dense in $\mathcal{A} .$

\begin{proof}
Suppose that $T \in$$\mathcal{A}$ and that~$\mathcal{F}$ is a finite
subset~of~$\mathfrak{X}$. Let $\mathcal{R}_{1}^{\#} \cap \mathcal{A}$ denote
the span of~$\mathcal{R}_{1} \cap \mathcal{A}$. We need to show that there
exists $S \in \mathcal{R}_{1}^{\#} \cap \mathcal{A}$ such that~$Sx =Tx$ for all
$x \in \mathcal{F}$.

By Lemma \ref{idemdecomp} $\operatorname*{span} \mathcal{F}
=\operatorname*{ran} P$ for some idempotent $P \in \mathcal{A}$.
Furthermore,~$P$ is the sum of~$n$ rank-one idempotents in $\mathcal{A}$,
where $n =\dim \operatorname*{span} \mathcal{F}$. Let $T_{k} =TP_{k}$ for $1
\leq k \leq n$. Then $T_{k} \in \mathcal{A}$ and $\operatorname*{rank} T_{k}
\leq1$ for each $k$. So~$S =\sum_{k =1}^{n}T_{k} \in \mathcal{R}_{1}^{\#}
\cap \mathcal{A}$. Furthermore, for each $x \in \operatorname*{span}
\mathcal{F}$,
\[
Tx =TPx =\sum_{k =1}^{n}TP_{k}x =Sx .
\]
as required.
\end{proof}
\end{theorem}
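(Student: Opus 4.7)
The plan is to reduce the strict-density statement to the idempotent-decomposition result of Lemma \ref{idemdecomp}. Fix $T \in \mathcal{A}$ and a finite set $\mathcal{F} \subseteq \mathfrak{X}$; by the definition of the strict topology, I need to produce some $S$ in the linear span $\mathcal{R}_1^{\#} \cap \mathcal{A}$ of the rank-one operators in $\mathcal{A}$ satisfying $Sx = Tx$ for every $x \in \mathcal{F}$.

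First I would pass from $\mathcal{F}$ to the finite-dimensional subspace $M = \operatorname{span} \mathcal{F}$, which is the natural object to feed into Lemma \ref{idemdecomp}. That lemma produces an idempotent $P \in \mathcal{A}$ with $\operatorname{ran} P = M$ together with rank-one idempotents $P_1, \ldots, P_n \in \mathcal{A}$ (where $n = \dim M$) whose sum is $P$. Because $P$ is idempotent with range $M$, it restricts to the identity on $M$, so $Px = x$ for every $x \in \mathcal{F}$.

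Next I would form the candidate $S = TP = \sum_{k=1}^{n} T P_k$. Each $T P_k$ lies in $\mathcal{A}$ (as $\mathcal{A}$ is an algebra), and each has rank at most one (since $\operatorname{rank}(T P_k) \leq \operatorname{rank} P_k = 1$), so $T P_k \in \mathcal{R}_1 \cap \mathcal{A}$ or is zero, either way contributing to $\mathcal{R}_1^{\#} \cap \mathcal{A}$. Consequently $S \in \mathcal{R}_1^{\#} \cap \mathcal{A}$, and for $x \in \mathcal{F}$,
\[
Sx = TPx = Tx,
\]
which is exactly the condition for $S \in \bigcap_{x \in \mathcal{F}} \mathcal{U}(T, x)$.

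There is really no serious obstacle here once Lemma \ref{idemdecomp} is in hand; the whole content of the theorem is packaged into the existence of the idempotent $P \in \mathcal{A}$ with range $M$ that decomposes as a sum of rank-one idempotents in $\mathcal{A}$. The only small point to keep in mind is that $T P_k$ need not be a \emph{rank-one} operator (it could vanish), so I appeal to the linear span $\mathcal{R}_1^{\#} \cap \mathcal{A}$ rather than to $\mathcal{R}_1 \cap \mathcal{A}$ itself; this is exactly what the statement allows.
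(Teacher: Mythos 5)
Your proposal is correct and follows essentially the same route as the paper: apply Lemma \ref{idemdecomp} to $\operatorname{span}\mathcal{F}$, set $S=TP=\sum_k TP_k$, and use $Px=x$ on $\operatorname{ran}P$ to conclude $Sx=Tx$. Your explicit remarks that $P$ restricts to the identity on its range and that $TP_k$ may vanish (hence the need for the linear span) are points the paper leaves implicit, but the argument is the same.
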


\begin{remark}
The proof of Theorem~\ref{strictd} is easily modified to show that if $T$ is a
finite rank operator in $\mathcal{I}$, where $\mathcal{I}$ is a right ideal in
$\mathcal{A}$, then $T$ is the sum of $n$ rank-one operators in $\mathcal{I}$,
where $n =\operatorname*{rank} T .$
\end{remark}

\section{Dual nests}

For any subset~$M$ of $\mathfrak{X}$, let~$M^{\perp}$ denote the annihilator
of~$M$, i.e.~%
\[
M^{ \bot} =\{ \varphi \ :\  \varphi \in \mathfrak{X}^{ \prime}\text{}%
\text{}\  \text{and}\  \ M \subseteq \ker \varphi \} \text{ }%
\]
Suppose that $\mathfrak{N}$ is a nest of subspaces of~$\mathfrak{X} ,$ and
that~$\mathfrak{N}^{ \bot} =\{M^{ \bot} :M \in \mathfrak{N}\}$. We
call~$\mathfrak{N}^{ \bot}$ the dual of the nest $\mathfrak{N}$. Since the
map~$M \mapsto M^{ \bot}$ is order reversing, i.e.~$M_{1} \subseteq M_{2}
\Longleftrightarrow M_{1}^{ \bot} \supseteq M_{2}^{ \bot}$,~$\mathfrak{N}^{
\bot}$ is a linearly ordered family~of subspaces of $\mathfrak{X}^{ \prime}$
that is anti-order isomorphic to $\mathfrak{N}$.

We are interested in the issue of completeness of $\mathfrak{N}^{ \bot}$.

\begin{lemma}
\label{dualcomplete}For any family $\{M_{\alpha} :\alpha \in \Psi \}$ of
subspaces in $\mathfrak{N}$,
\[
\bigcap_{\alpha \in \Psi}M_{\alpha}^{ \bot} =\left( \bigcup_{\alpha \in \Psi
}M_{\alpha}\right) ^{ \bot}\  \text{and}\  \bigcup_{\alpha \in \Psi}M_{\alpha}^{
\bot} \subseteq \left( \bigcap_{\alpha \in \Psi}M_{\alpha}\right) ^{ \bot}%
\]

\begin{proof}
Suppose that $\varphi \in \mathfrak{X}^{ \prime}$. It is easy to see that
\[
\varphi \in \bigcap_{\alpha \in \Psi}M_{\alpha}^{ \bot} \iff M_{\alpha}
\subseteq \ker \varphi \  \text{for all}\  \alpha \in \Psi \iff \varphi \in \left(
\bigcup_{\alpha \in \Psi}M_{\alpha}\right) ^{\perp} .
\]

Similarly, if $\varphi \in \bigcup_{\alpha \in \Psi}M_{\alpha}^{ \bot}$
then~$M_{\alpha^{\#}} \subseteq \ker \varphi$ for some~$\alpha^{\#} \in \Psi$. It
follows that~$\bigcap_{\alpha \in \Psi}M_{\alpha} \subseteq \ker \varphi$,
i.e.~$\varphi \in \left( \bigcap_{\alpha \in \Psi}M_{\alpha}\right) ^{ \bot} .$
\end{proof}
\end{lemma}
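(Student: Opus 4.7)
The plan is to reduce both claims to the defining condition $\varphi \in M^{\bot} \iff M \subseteq \ker \varphi$, and then argue by simple set-theoretic manipulations of containment in a kernel.

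For the equality $\bigcap_{\alpha} M_{\alpha}^{\bot} = \left(\bigcup_{\alpha} M_{\alpha}\right)^{\bot}$, I would chain biconditionals: a functional $\varphi \in \mathfrak{X}'$ lies in $\bigcap_{\alpha} M_{\alpha}^{\bot}$ iff $M_{\alpha} \subseteq \ker \varphi$ for every $\alpha \in \Psi$, iff the union $\bigcup_{\alpha} M_{\alpha}$ is contained in $\ker \varphi$ (this is the standard fact that a union is contained in a set precisely when each piece is), iff $\varphi \in (\bigcup_{\alpha} M_{\alpha})^{\bot}$. No use of the nest structure is needed here.

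For the inclusion $\bigcup_{\alpha} M_{\alpha}^{\bot} \subseteq \left(\bigcap_{\alpha} M_{\alpha}\right)^{\bot}$, I would pick $\varphi$ in the left side, select some index $\alpha_{0}$ with $\varphi \in M_{\alpha_{0}}^{\bot}$, and exploit monotonicity of the annihilator: since $\bigcap_{\alpha} M_{\alpha} \subseteq M_{\alpha_{0}} \subseteq \ker \varphi$, we get $\varphi \in \left(\bigcap_{\alpha} M_{\alpha}\right)^{\bot}$.

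There is no genuine obstacle — the argument is just unwinding the definition — but the statement is phrased asymmetrically on purpose, and the only subtlety worth flagging is why the second inclusion is a containment rather than an equality. A functional that annihilates the intersection $\bigcap_{\alpha} M_{\alpha}$ need not annihilate any single $M_{\alpha}$, so the reverse inclusion can genuinely fail. This asymmetry between the behavior of $\bot$ on unions versus intersections is exactly what will later control whether the dual nest $\mathfrak{N}^{\bot}$ is complete.
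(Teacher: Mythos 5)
Your proposal is correct and follows essentially the same route as the paper: the equality is obtained by chaining the biconditionals through the condition $M_{\alpha}\subseteq\ker\varphi$, and the inclusion by picking a single index and using monotonicity of the annihilator. Your closing remark on why the second statement is only an inclusion is accurate and anticipates the role this asymmetry plays in the completeness discussion that follows.
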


\begin{corollary}
$\mathfrak{N}^{\perp}$ is complete if and only if $\bigcup_{\alpha \in \Psi
}M_{\alpha}^{ \bot} =\left( \bigcap_{\alpha \in \Psi}M_{\alpha}\right) ^{ \bot}$
for each family~$\{M_{\alpha} :\alpha \in \Psi \}$ of subspaces in $\mathfrak{N}$.

\begin{proof}
In the light of Lemma~\ref{dualcomplete} it is sufficient to show that
if~$\mathfrak{N}^{ \bot}$ is complete and $\{M_{\alpha} :\alpha \in \Psi \}$ is a
family of subspaces in $\mathfrak{N}$, then~$\left( \bigcap_{\alpha \in \Psi
}M_{\alpha}\right) ^{ \bot} \subseteq \bigcup_{\alpha \in \Psi}M_{\alpha}^{\perp}
.$

If~$\mathfrak{N}^{ \bot}$ is complete, $\bigcup_{\alpha \in \Psi}M_{\alpha
}^{\perp} =M_{\#}^{\perp}$ for some~$M_{\#} \in \mathfrak{N}$. Suppose
that~$\alpha_{0} \in \Psi$. Then~$M_{\alpha_{0}}^{\perp} \subseteq
\bigcup_{\alpha \in \Psi}M_{\alpha}^{\perp} =M_{\#}^{\perp} ,$ and so~$M_{\#}
\subseteq M_{\alpha_{0}}$$.$ Therefore~$M_{\#} \subseteq \bigcap_{\alpha \in
\Psi}M_{\alpha} ,$ and so $\left( \bigcap_{\alpha \in \Psi}M_{\alpha}\right) ^{
\bot} \subseteq M_{\#}^{\perp} =\bigcup_{\alpha \in \Psi}M_{\alpha}^{\perp} ,$
as required.
\end{proof}
\end{corollary}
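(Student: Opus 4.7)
The plan is to invoke Lemma \ref{dualcomplete} up front, which already supplies the inclusion $\bigcup_{\alpha} M_{\alpha}^{\bot} \subseteq \left(\bigcap_{\alpha} M_{\alpha}\right)^{\bot}$ for free. Consequently the claimed identity in the corollary is equivalent to the single reverse inclusion $\left(\bigcap_{\alpha} M_{\alpha}\right)^{\bot} \subseteq \bigcup_{\alpha} M_{\alpha}^{\bot}$, and I only need to show that this reverse inclusion holds for every family in $\mathfrak{N}$ if and only if $\mathfrak{N}^{\bot}$ is complete.

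For the forward direction, I would assume $\mathfrak{N}^{\bot}$ is complete. The key observation is that $\mathfrak{N}^{\bot}$ is totally ordered (being anti-order-isomorphic to $\mathfrak{N}$), so in a complete totally ordered family the set-theoretic union of any subfamily is its least upper bound and lies in the family. Thus $\bigcup_{\alpha} M_{\alpha}^{\bot} = M_{\#}^{\bot}$ for some $M_{\#} \in \mathfrak{N}$. Since $M_{\alpha_0}^{\bot} \subseteq M_{\#}^{\bot}$ for each fixed $\alpha_0 \in \Psi$, the order-reversing property of $M \mapsto M^{\bot}$ gives $M_{\#} \subseteq M_{\alpha_0}$, and therefore $M_{\#} \subseteq \bigcap_{\alpha} M_{\alpha}$; applying annihilation once more yields $\left(\bigcap_{\alpha} M_{\alpha}\right)^{\bot} \subseteq M_{\#}^{\bot} = \bigcup_{\alpha} M_{\alpha}^{\bot}$, as needed.

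For the converse, I would assume the identity holds for all families and then exhibit both the join and the meet of an arbitrary family $\{M_{\alpha}^{\bot}\}$ inside $\mathfrak{N}^{\bot}$. The meet is handled for free by the first half of Lemma \ref{dualcomplete}, which identifies it with $\left(\bigcup_{\alpha} M_{\alpha}\right)^{\bot}$, an element of $\mathfrak{N}^{\bot}$ because $\bigcup_{\alpha} M_{\alpha} \in \mathfrak{N}$ by completeness of the nest. The join equals $\left(\bigcap_{\alpha} M_{\alpha}\right)^{\bot}$ by the assumed identity, and again this lies in $\mathfrak{N}^{\bot}$ since $\bigcap_{\alpha} M_{\alpha} \in \mathfrak{N}$. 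Hence arbitrary joins and meets of $\mathfrak{N}^{\bot}$ stay inside $\mathfrak{N}^{\bot}$, giving completeness.

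I expect no serious obstacle: the corollary is essentially a bookkeeping exercise, and the only real content is the forward direction, which exploits the rigidity of a complete totally ordered family (the join of a subfamily is forced to be its set-theoretic union). The most subtle point to keep in mind is that completeness of $\mathfrak{N}^{\bot}$ demands control of both joins and meets; the meet side is automatic from Lemma \ref{dualcomplete}, so the whole equivalence really turns on characterizing when the join $\bigcup_{\alpha} M_{\alpha}^{\bot}$ is itself in $\mathfrak{N}^{\bot}$, which is precisely what the displayed identity encodes.
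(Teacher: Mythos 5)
Your proposal is correct and takes essentially the same route as the paper: the forward direction is word-for-word the paper's argument (complete $\Rightarrow$ $\bigcup_{\alpha}M_{\alpha}^{\bot}=M_{\#}^{\bot}$, then use order reversal to get $M_{\#}\subseteq\bigcap_{\alpha}M_{\alpha}$), and the converse is exactly what the paper leaves implicit behind the phrase ``in the light of Lemma \ref{dualcomplete}.'' Your only addition is to spell out that implicit converse (meets handled by the first half of Lemma \ref{dualcomplete} together with completeness of $\mathfrak{N}$, joins by the assumed identity), which is a welcome clarification but not a different proof.
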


\begin{example}
\label{c00(N)}Suppose that $\mathfrak{X} =c_{00}(\mathbb{N})$, the vector
space of all finitely non-zero~$\mathbb{F}$-valued sequences.~Then
$\mathfrak{X}^{ \prime}$ can be regarded as the vector space of all~
$\mathbb{F}$-valued sequences. If~$f =(f(k))_{k =1}^{\infty} \in \mathfrak{X}$
and $\varphi=(\varphi(k))_{k =1}^{\infty} \in \mathfrak{X}^{ \prime}$,~then
$\varphi(f) =\sum_{k =1}^{\infty}\varphi(k)f(k) .$

For each $n \in \mathbb{N}$, let~$M_{n} =\{f \in \mathfrak{X}
:\operatorname*{supp} f \subseteq \{1 ,2 ,3 , \cdots,n\} \}$, where
$\operatorname*{supp} (f(k))_{k =1}^{\infty} =\{k :f(k) \neq0\}$ , and let
\[
\mathfrak{N} =\{ \{0\} ,M_{1} ,M_{2} ,M_{3} , \cdots,\mathfrak{X}\} .
\]
Then~$\mathfrak{N}$ is a complete, totally ordered family of subspaces of
$\mathfrak{X} ,$ i.e.~$\mathfrak{N}$ is a nest.

Note that~$M_{n}^{ \bot} =\{ \varphi \in \mathfrak{X}^{ \prime}%
\ :\operatorname*{supp} \varphi \subseteq \{n +1 ,n +2 ,n +3 , \cdots \} .$ It is
easy to see that $\mathfrak{N}^{ \bot} =\{ \mathfrak{X}^{ \prime} ,M_{1}%
^{\perp} ,M_{2}^{\perp} ,M_{3}^{\perp} , \cdots,\{0\} \}$ is a complete,
totally ordered family of subspaces of~$\mathfrak{X}^{ \prime} ,$
i.e.~$\mathfrak{N}^{ \bot}$ is a nest.~
\end{example}

\begin{example}
\label{c00(N)1}Suppose that $\mathfrak{X} =c_{00}(\mathbb{N})$ as in Example
\ref{c00(N)}, and let
\[
\mathfrak{N}^{\#} =\{ \mathfrak{X} ,M_{1}^{\#} ,M_{2}^{\#} ,M_{3}^{\#} ,
\cdots,\{0\} \} ,
\]
where~$M_{n}^{\#} =\{f \in \mathfrak{X} :\operatorname*{supp} f \subseteq \{n +1
,n +2 ,n +3 , \cdots \}$~for each $n \in \mathbb{N}$. Then~$\mathfrak{N}^{\#}$
is a complete, totally ordered family of subspaces of $\mathfrak{X} ,$ i.e.
$\mathfrak{N}^{\#}$ is a nest.

Note that $(M_{n}^{\#})^{ \bot} =M_{n} =$$\{ \varphi \in \mathfrak{X}^{ \prime}
:\operatorname*{supp} \varphi \subseteq \{1 ,2 ,3 , \cdots,n\} \}$ as in Example
\ref{c00(N)}. So $(M_{1}^{\#})^{ \bot} ,(M_{2}^{\#})^{ \bot} ,(M_{3}^{\#})^{
\bot} , \cdots$ is a strictly increasing sequence in $(\mathfrak{N}^{\#})^{
\bot}$, and $\bigcup_{n =1}^{\infty}(M_{n}^{\#})^{ \bot}$$=\mathfrak{X}
\notin \left( \mathfrak{N}^{\#}\right) ^{ \bot} .$ So~$(\mathfrak{N}^{\#})^{
\bot}$ is not complete.
\end{example}

The nest~$\mathfrak{N}^{\#}$ in Example~\ref{c00(N)1} has a strictly
decreasing, infinite sequence of subspaces, i.e.,~it is not well-ordered.~The
following lemma shows that this is the key to~the incompleteness of
$(\mathfrak{N}^{\#})^{ \bot}$.

\begin{lemma}
\label{wellorddual}Suppose that $\mathfrak{N}$ is a complete nest of subspaces
of a vector space $\mathfrak{X}$. Then $\mathfrak{N}^{ \bot}$ is complete if
and only if~$\mathfrak{N}$ is well-ordered.

\begin{proof}
First suppose that $\mathfrak{N}$ is well-ordered, and that $\{M_{\alpha}
:\alpha \in \Psi \}$ is a family of subspaces in $\mathfrak{N}$. In the light of
Corollary \ref{dualcorr} it is sufficient to show that~ $\  \left(
\bigcap_{\alpha \in \Psi}M_{\alpha}\right) ^{ \bot}\  \subseteq \bigcup_{\alpha
\in \Psi}M_{\alpha}^{ \bot} .$

Since $\mathfrak{N}$ is well-ordered, $\cap_{\alpha \in \Psi}M_{\alpha}
=M_{\alpha^{\#}}$ for some $\alpha^{\#} \in \Psi$.~So%
\[
\left( \bigcap_{\alpha \in \Psi}M_{\alpha}\right) ^{ \bot} =M_{\alpha^{\#}}^{
\bot} \subseteq \bigcup_{\alpha \in \Psi}M_{\alpha}^{ \bot} ,\  \text{as required.
}%
\]

Now suppose that $\mathfrak{N}$ is not well-ordered, and that~$M_{1} ,M_{2}
,M_{3} , \cdots$ is a strictly decreasing infinite sequence of subspaces
in~$\mathfrak{N}$. For each $n \in \mathbb{N}$ choose $x_{n}$ such that~$x_{n}
=M_{n} \setminus M_{n +1}$. Then $\{x_{1} ,x_{2} ,x_{3} , \cdots \}$ is a
linearly independent set and $\operatorname*{span} \{x_{1} ,x_{2} ,x_{3} ,
\cdots \} \cap M_{\infty} =\{0\}$, where~$M_{\infty} = \cap_{n =1}^{\infty
}M_{n}$. So there exists $\varphi \in \mathfrak{X}^{ \prime}$ such that~%
\begin{equation}
\varphi(x_{n}) =1\  \text{for each}\ n \in \text{}\mathbb{N}\  \  \text{and}%
\  \ M_{\infty} \subseteq \ker \varphi \label{fnal1}%
\end{equation}
It follows easily from (\ref{fnal1}) that $\varphi \in M_{\infty}^{ \bot}
\setminus \left( \bigcup_{n =1}^{\infty}M_{n}^{ \bot}\right)  .$ So~%
\begin{equation}
\bigcup_{n =1}^{\infty}M_{n}^{ \bot} \subset M_{\infty}^{ \bot}\label{fnal2}%
\end{equation}

Suppose that $\bigcup_{n =1}^{\infty}M_{n}^{ \bot} \in \mathfrak{N}^{\perp}$,
i.e. $\bigcup_{n =1}^{\infty}M_{n}^{ \bot} =M^{\perp}$ for some~$M
\in \mathfrak{N}$. Then~$M_{n}^{\perp} \subseteq M^{\perp}$ and~$M \subseteq
M_{n}$ for each $n \in \mathbb{N}$. So~$M \subseteq M_{\infty} ,$ and
hence~$M_{\infty}^{ \bot} \subseteq M^{\perp}$. But this contradicts
(\ref{fnal2}), and so there is no such subspace~$M$ in~$\mathfrak{N} .$ So
$\mathfrak{N}^{\perp}$ is not complete.
\end{proof}
\end{lemma}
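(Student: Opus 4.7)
My plan is to use the Corollary immediately preceding the Lemma, which reduces completeness of $\mathfrak{N}^{\bot}$ to the equality $\left(\bigcap_{\alpha} M_{\alpha}\right)^{\bot} = \bigcup_{\alpha} M_{\alpha}^{\bot}$ for every subfamily of $\mathfrak{N}$. One inclusion is already given by Lemma \ref{dualcomplete}, so only the reverse inclusion needs attention.

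For the forward implication, I would observe that if $\mathfrak{N}$ is well-ordered then any nonempty subfamily $\{M_{\alpha}:\alpha\in\Psi\}$ has a smallest element $M_{\alpha^{\#}}$, hence $\bigcap_{\alpha\in\Psi} M_{\alpha} = M_{\alpha^{\#}}$. Then $\left(\bigcap_{\alpha\in\Psi}M_{\alpha}\right)^{\bot} = M_{\alpha^{\#}}^{\bot} \subseteq \bigcup_{\alpha\in\Psi} M_{\alpha}^{\bot}$, giving the missing inclusion and hence completeness via the Corollary.

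For the reverse implication I would argue contrapositively. If $\mathfrak{N}$ is not well-ordered, pick a strictly decreasing sequence $M_{1}\supsetneq M_{2}\supsetneq\cdots$ in $\mathfrak{N}$, vectors $x_{n}\in M_{n}\setminus M_{n+1}$, and set $M_{\infty}=\bigcap_{n}M_{n}$. The key technical step is to produce $\varphi\in\mathfrak{X}^{\prime}$ with $\varphi(x_{n})=1$ for all $n$ and $M_{\infty}\subseteq\ker\varphi$. To justify this, I would first verify that $\{x_{n}\}$ is linearly independent (any relation supported up to index $N$ would place a nonzero vector into $M_{N+1}$, contradicting the choice of $x_{N}$) and that $\operatorname{span}\{x_{n}\}\cap M_{\infty}=\{0\}$ (for the same reason, since $M_{\infty}\subseteq M_{N+1}$). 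Then extend a Hamel basis of $M_{\infty}\oplus\operatorname{span}\{x_{n}\}$ to a Hamel basis of $\mathfrak{X}$, and define $\varphi$ to send each $x_{n}$ to $1$ and everything else in the basis to $0$; this is the only real obstacle, and it relies on the fact that in a pure vector-space setting we have total freedom to prescribe values on a basis.

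With $\varphi$ in hand, $\varphi(x_{n})=1\neq 0$ while $x_{n}\in M_{n}$, so $\varphi\notin M_{n}^{\bot}$ for any $n$, whereas $\varphi\in M_{\infty}^{\bot}$. Thus $\bigcup_{n}M_{n}^{\bot}\subsetneq M_{\infty}^{\bot}$. To conclude that $\mathfrak{N}^{\bot}$ is not complete, suppose for contradiction that $\bigcup_{n}M_{n}^{\bot}=M^{\bot}$ for some $M\in\mathfrak{N}$. Since $M\mapsto M^{\bot}$ is an anti-order isomorphism of $\mathfrak{N}$ onto $\mathfrak{N}^{\bot}$, the inclusions $M_{n}^{\bot}\subseteq M^{\bot}$ yield $M\subseteq M_{n}$ for every $n$, hence $M\subseteq M_{\infty}$ and $M_{\infty}^{\bot}\subseteq M^{\bot}=\bigcup_{n}M_{n}^{\bot}$, contradicting the strict inclusion just established. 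Therefore no such $M$ exists and $\mathfrak{N}^{\bot}$ fails to be complete.
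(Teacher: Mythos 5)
Your proposal is correct and follows essentially the same route as the paper's proof: the forward direction via the preceding corollary and the least element of a subfamily, and the converse via a strictly decreasing chain, the functional $\varphi$ with $\varphi(x_n)=1$ and $M_\infty\subseteq\ker\varphi$, and the order-reversal argument ruling out any $M\in\mathfrak{N}$ with $M^\bot=\bigcup_n M_n^\bot$. You in fact supply details the paper omits (the Hamel-basis construction of $\varphi$); just note that the linear-independence argument should be run from the \emph{smallest} index carrying a nonzero coefficient, which forces $x_k\in M_{k+1}$, rather than from the largest.
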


\section{The Jacobson radical}

Suppose that $\mathcal{R}$ is a ring with identity $1$. The Jacobson radical
$\operatorname*{Rad} \mathcal{R}$ is the intersection of all maximal left
ideals of $\mathcal{R}$. It is also the intersection of all maximal right
ideals of $\mathcal{R}$.~ See (\cite{Is}). A more useful characterisation
of~$\operatorname*{Rad} \mathcal{R}$ is the following:

\begin{proposition}
\label{Raddef}Suppose that $T \in \mathcal{R}$. The following are equivalent:

1.\qquad$T \in \operatorname*{Rad} \mathcal{R}$

2.\qquad$1 -AT$ is invertible in~$\mathcal{R}$ for each~$A \in \mathcal{A}$

3.\qquad$1 -TA$ is invertible in $\mathcal{R}$ for each $A \in \mathcal{A}$
\end{proposition}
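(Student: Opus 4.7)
The plan is to establish the cycle (1) $\Rightarrow$ (2) $\Rightarrow$ (1) and then deduce (2) $\iff$ (3) by a direct switch formula, keeping the quantifier $A \in \mathcal{A}$ intact throughout. The only place where $\mathcal{A}$ must actually supply a specific witnessing element produced by a maximal-ideal argument is the (2) $\Rightarrow$ (1) step; I will flag this dependence explicitly rather than silently enlarging the quantifier.

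For (1) $\Rightarrow$ (2), fix $T \in \operatorname{Rad}\mathcal{R}$ and $A \in \mathcal{A}$. Because $\operatorname{Rad}\mathcal{R}$ is a two-sided ideal, $AT \in \operatorname{Rad}\mathcal{R}$. I first show $1 - AT$ has a left inverse: if the principal left ideal $\mathcal{R}(1 - AT)$ were proper, it would sit in some maximal left ideal $\mathcal{M}$, and $\mathcal{M}$ would then contain both $1 - AT$ and $AT$ (the latter from $\operatorname{Rad}\mathcal{R} \subseteq \mathcal{M}$), forcing the impossible $1 \in \mathcal{M}$. So $B(1 - AT) = 1$ for some $B \in \mathcal{R}$. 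To promote $B$ to a two-sided inverse, rewrite as $B = 1 + BAT$; since $BAT \in \operatorname{Rad}\mathcal{R}$, the same argument applied to $-BAT$ produces $C \in \mathcal{R}$ with $C(1 + BAT) = CB = 1$. Left-multiplying $B(1 - AT) = 1$ by $C$ then gives $1 - AT = C$, hence $(1 - AT)B = CB = 1$, so $1 - AT$ is invertible in $\mathcal{R}$.

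For (2) $\Rightarrow$ (1), I argue by contrapositive. Suppose $T \notin \operatorname{Rad}\mathcal{R}$; then some maximal left ideal $\mathcal{M}$ of $\mathcal{R}$ omits $T$, and maximality forces $\mathcal{M} + \mathcal{R}T = \mathcal{R}$. Writing $1 = m + AT$ with $m \in \mathcal{M}$ and some $A \in \mathcal{R}$ gives $1 - AT = m$, which lies in the proper ideal $\mathcal{M}$ and hence is not invertible. This contradicts (2) precisely when the witnessing $A$ extracted from the decomposition belongs to $\mathcal{A}$; this is the step that forces the set $\mathcal{A}$ in the statement to be rich enough to catch every such witness (equivalently, to exhaust $\mathcal{R}$), which I take to be the intended reading.

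For (2) $\iff$ (3), the classical switch is direct. Given $B$ the two-sided inverse of $1 - AT$, the identities $B(1-AT) = (1-AT)B = 1$ yield $BAT = ATB = B - 1$, and a routine expansion then verifies $(1 - TA)(1 + TBA) = 1 = (1 + TBA)(1 - TA)$, so $(1 - TA)^{-1} = 1 + TBA$; the reverse implication is symmetric. The main technical obstacle in the whole argument is the upgrade from a one-sided to a two-sided inverse in (1) $\Rightarrow$ (2); the key observation that resolves it is that any left inverse $B$ of $1 - AT$ is itself of the form $1 - r$ with $r = -BAT \in \operatorname{Rad}\mathcal{R}$, so the left-invertibility argument can be iterated once more to supply the missing right inverse automatically.
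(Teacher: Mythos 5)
Your proof is correct. Note, however, that the paper does not actually prove this proposition: it is quoted as a standard fact about the Jacobson radical of a unital ring, with a citation to Isaacs' \emph{Algebra, a graduate course}, so there is no in-paper argument to compare against. Your argument is the standard textbook one, and every step checks out: the maximal-left-ideal argument gives left invertibility of $1-AT$; the upgrade to a two-sided inverse via $B = 1 + BAT$ with $BAT \in \operatorname*{Rad}\mathcal{R}$ is the usual trick and is carried out correctly; and the identity $(1-TA)^{-1} = 1 + T(1-AT)^{-1}A$ correctly handles $(2) \iff (3)$. You were also right to flag the quantifier: the ``$A \in \mathcal{A}$'' in the statement is a typo for $A \in \mathcal{R}$ (the symbol $\mathcal{A}$ is not defined in the context of an abstract ring $\mathcal{R}$, and the reverse implication $(2) \Rightarrow (1)$ genuinely requires the witness $A$ produced by the maximal-ideal decomposition to be an allowed choice, i.e.\ the quantifier must range over all of $\mathcal{R}$); this reading is also the one the paper uses later, e.g.\ in Lemma \ref{Rad1}, where the chosen $A = x \otimes \varphi$ lies in the nest algebra, which there coincides with the ring in question.
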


\begin{definition}
Suppose that~$\mathfrak{N}$ is a nest on $\mathfrak{X}$ and that
$\mathcal{A}=\operatorname*{Alg}\mathfrak{N}$. The strictly triangular~ideal
$\mathcal{A}_{-}$ is defined by
\[
\mathcal{A}_{-}=\{T:T\in \mathcal{A}\  \text{and}\ Tx\in \mathfrak{N}%
(x)_{-}\  \text{for all}\ x\in \mathfrak{X}\}
\]

\end{definition}

\begin{lemma}
\label{Rad1}Suppose that~$\mathfrak{N}$ is a nest on $\mathfrak{X}$ and that
$\mathcal{A} =\operatorname*{Alg} \mathfrak{N}$. Then
\[
\operatorname*{Rad} \mathcal{A} \subseteq \mathcal{A}_{ -} .
\]

\begin{proof}
Suppose that $T \in \mathcal{A} \setminus \mathcal{A}_{ -} .$ Then $Tx
\notin \mathfrak{N}(x)_{ -}$ for some~$x \in \mathfrak{X}$.~Choose $\varphi \in
X^{ \prime}$ such that $\varphi(Tx) =1$ and~$\mathfrak{N}(x)_{ -}
\subseteq \ker \varphi$. It follows from (\ref{rank1inA}) that~$x \otimes
\varphi \in \mathcal{A}$.

Now~$(1 -(x \otimes \varphi)T)x =x -\varphi(Tx)x =0$. So~$1 -(x \otimes
\varphi)T$ is not invertible and so $T \notin \operatorname*{Rad} \mathcal{A}$
by Proposition \ref{Raddef}.
\end{proof}
\end{lemma}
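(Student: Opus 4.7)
The plan is to prove the contrapositive: given $T \in \mathcal{A} \setminus \mathcal{A}_{-}$, I will exhibit an element $R$ of $\mathcal{A}$ such that $1 - RT$ fails to be invertible, and then invoke condition~2 of Proposition~\ref{Raddef} to conclude $T \notin \operatorname{Rad}\mathcal{A}$. The natural candidate for $R$ is a rank-one operator of the form $x \otimes \varphi$ that lies in $\mathcal{A}$ by Lemma~\ref{rank1inA}, designed so that $RTx = x$; then $1 - RT$ annihilates the nonzero vector $x$ and is therefore not invertible.

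First, from $T \notin \mathcal{A}_{-}$ I extract a vector $x \in \mathfrak{X}$ with $Tx \notin \mathfrak{N}(x)_{-}$. Note $x \neq 0$, since otherwise $Tx = 0$ would trivially lie in $\mathfrak{N}(x)_{-}$. Next I need a linear functional $\varphi \in \mathfrak{X}'$ satisfying $\varphi(Tx) = 1$ and $\mathfrak{N}(x)_{-} \subseteq \ker \varphi$. The first condition will force $(x \otimes \varphi)Tx = \varphi(Tx)\,x = x$, and the second is exactly the hypothesis of Lemma~\ref{rank1inA} ensuring $x \otimes \varphi \in \mathcal{R}_1 \cap \mathcal{A}$.

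The existence of such $\varphi$ is a purely algebraic separation argument: because $Tx \notin \mathfrak{N}(x)_{-}$, the coset $Tx + \mathfrak{N}(x)_{-}$ is a nonzero element of the quotient space $\mathfrak{X}/\mathfrak{N}(x)_{-}$, so one defines a linear functional on the one-dimensional subspace it spans by sending it to $1$, extends by choosing a complementary subspace, and composes with the quotient map. No topology or completeness is needed; this is the vector-space analogue of Hahn--Banach.

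With $R = x \otimes \varphi$ in hand, the final computation $(1 - RT)x = x - \varphi(Tx)\,x = 0$ shows $1 - RT$ has nontrivial kernel, so it is not invertible in $\mathcal{A}$, and Proposition~\ref{Raddef} yields $T \notin \operatorname{Rad}\mathcal{A}$. The one step requiring any care is the extension of $\varphi$, but it is entirely routine in the algebraic setting; everything else is essentially a bookkeeping combination of Lemma~\ref{rank1inA} and Proposition~\ref{Raddef}.
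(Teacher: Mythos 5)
Your proposal is correct and follows exactly the paper's own argument: pick $x$ with $Tx\notin\mathfrak{N}(x)_{-}$, build $\varphi$ with $\varphi(Tx)=1$ and $\mathfrak{N}(x)_{-}\subseteq\ker\varphi$, note $x\otimes\varphi\in\mathcal{A}$ by Lemma~\ref{rank1inA}, and observe that $1-(x\otimes\varphi)T$ kills the nonzero vector $x$. The only difference is that you spell out the quotient-space construction of $\varphi$ and the check that $x\neq0$, which the paper leaves implicit.
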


We now seek conditions which are either necessary or~sufficient for the
equality of the radical~$\operatorname*{Rad} \mathcal{A}$ and the strictly
triangular~ideal $\mathcal{A}_{ -}$ The notion of local nilpotence will be useful.

\begin{definition}
We say that $T \in \mathcal{L} (\mathfrak{X})$ is nilpotent at $x
\in \mathfrak{X}$ if $T^{n} x =0$ for sufficiently large $n$. We say that $T$
is locally nilpotent if it is nilpotent at each $x \in \mathfrak{X}$.
\end{definition}

\begin{lemma}
If~each $T \in \mathcal{A}_{ -}$ is locally nilpotent,
then~$\operatorname*{Rad} \mathcal{A} =\mathcal{A}_{ -}$.

\begin{proof}
Suppose that $T \in \mathcal{A}_{ -}$~and that $A \in \mathcal{A}$. Then $AT
\in \mathcal{A}_{ -}$ and hence is locally nilpotent by assumption.

~ Let $S =$~ $1 +\sum_{n =1}^{\infty}(AT)^{n}$. The sum $S$ is well-defined as
an operator in $\mathcal{L}(\mathfrak{X})$, because the local nilpotence of
$AT$ ensures that for each $x \in \mathfrak{X}$ the series $\sum_{n =1}%
^{\infty}(AT)^{n}x$ has only finitely many non-zero terms. If $x \in M$ for
some~$M \in \mathfrak{N}$, it is clear that $Sx \in M$. So $S \in \mathcal{A}$.
Furthermore, it is easy to see that $S(1 -AT) =(1 -AT)S =1$. So $S$ is the
inverse of $1 -AT$ in $\mathcal{A}$, and hence $T \in \operatorname*{Rad}
\mathcal{A}$.
\end{proof}
\end{lemma}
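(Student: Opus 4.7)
The plan is to invoke Proposition \ref{Raddef}: given the preceding Lemma \ref{Rad1}, which yields $\operatorname{Rad}\mathcal{A}\subseteq\mathcal{A}_{-}$, I only need the reverse inclusion. So I fix $T\in\mathcal{A}_{-}$ and aim to show that $1-AT$ is invertible in $\mathcal{A}$ for every $A\in\mathcal{A}$, by writing down an explicit Neumann-style inverse
\[
S = 1 + \sum_{n=1}^{\infty}(AT)^{n}.
\]

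First I would verify that $AT\in\mathcal{A}_{-}$. This is immediate: for any $x\in\mathfrak{X}$, $Tx\in\mathfrak{N}(x)_{-}$, and since $A\in\mathcal{A}=\operatorname{Alg}\mathfrak{N}$ leaves the subspace $\mathfrak{N}(x)_{-}\in\mathfrak{N}$ invariant, $(AT)x=A(Tx)\in\mathfrak{N}(x)_{-}$. By the hypothesis of the lemma, $AT$ is therefore locally nilpotent: for each $x\in\mathfrak{X}$ there is an $N_{x}$ with $(AT)^{n}x=0$ for all $n\geq N_{x}$. This is the sole reason the infinite series above makes sense; for each fixed $x$ only finitely many terms in $Sx$ are nonzero, so $S$ is well-defined as a pointwise linear operator on $\mathfrak{X}$.

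Next I would show $S\in\mathcal{A}$. Given $M\in\mathfrak{N}$ and $x\in M$, each power $(AT)^{n}$ lies in $\mathcal{A}$, so $(AT)^{n}x\in M$; hence $Sx$, being a finite sum of vectors in $M$, lies in $M$. Then I would check $S(1-AT)=(1-AT)S=1$ by the standard telescoping computation applied vector-by-vector (this is legitimate precisely because each $Sx$ is a finite sum). This shows that $1-AT$ is invertible in $\mathcal{A}$, and the symmetric condition for $1-TA$ follows the same way. Proposition~\ref{Raddef} then places $T$ in $\operatorname{Rad}\mathcal{A}$, completing the inclusion $\mathcal{A}_{-}\subseteq\operatorname{Rad}\mathcal{A}$.

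The only real subtlety is the well-definedness of $S$ as a genuine linear operator in $\mathcal{L}(\mathfrak{X})$: the series converges in no topology beyond pointwise, and one should not be tempted to use partial sums or compositions unless one first fixes $x$ and truncates. Once one commits to defining $Sx$ vector-by-vector using local nilpotence, linearity of $S$ and the two relations $S(1-AT)=(1-AT)S=1$ are both routine, so I do not expect any serious obstruction.
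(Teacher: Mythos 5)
Your proposal is correct and follows essentially the same route as the paper: the Neumann series $S=1+\sum_{n=1}^{\infty}(AT)^{n}$, justified pointwise by local nilpotence of $AT\in\mathcal{A}_{-}$, combined with Lemma \ref{Rad1} for the reverse inclusion. Your explicit verification that $AT\in\mathcal{A}_{-}$ (via invariance of $\mathfrak{N}(x)_{-}$ under $A$) is a detail the paper leaves unstated, but the argument is identical in substance.
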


\begin{lemma}
\label{wellordlocnil}If~$\mathfrak{N}$ is well-ordered~then each $T
\in \mathcal{A}_{ -}$ is locally nilpotent.

\begin{proof}
Suppose that $T \in \mathcal{A}_{ -}$ is not locally nilpotent. Then there
exists $x \in \mathfrak{X}$ such that $T^{n}x \neq0$ for all $n \in \mathbb{N}
.$~Since $T \in \mathcal{A}_{ -}$, for each~$n \in \mathbb{N} ,$%
\[
\mathfrak{N}(T^{n +1}x) \subseteq T(\mathfrak{N}(T^{n}x)) \subseteq
(\mathfrak{N}(T^{n}x))_{ -} \subset \mathfrak{N}(T^{n}x)
\]
So~$\mathfrak{N}(T^{n}x) :n =1 ,2 ,3 , \cdots$ is a strictly decreasing,
infinite sequence of subspaces in~$\mathfrak{N}$, and~hence~$\mathfrak{N}$ is
not well-ordered.
\end{proof}
\end{lemma}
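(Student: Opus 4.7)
The plan is to argue by contradiction: assume $T \in \mathcal{A}_-$ fails to be locally nilpotent, pick a witness $x$ with $T^n x \neq 0$ for every $n$, and extract a strictly descending infinite chain from $\mathfrak{N}$, violating the well-ordering hypothesis.

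The natural chain to look at is $\mathfrak{N}(T^n x)$ for $n = 0, 1, 2, \dots$. To show this is strictly descending, I would first verify the single-step inclusion $\mathfrak{N}(T^{n+1}x) \subseteq \mathfrak{N}(T^n x)_-$. Since $T \in \mathcal{A}$, $T$ preserves $\mathfrak{N}(T^n x)$, so $T^{n+1}x \in \mathfrak{N}(T^n x)$. Combined with $T \in \mathcal{A}_-$, the definition of $\mathcal{A}_-$ gives directly $T^{n+1}x = T(T^n x) \in \mathfrak{N}(T^n x)_-$. By the granularity equivalence (\ref{granular}), this upgrades to $\mathfrak{N}(T^{n+1}x) \subseteq \mathfrak{N}(T^n x)_-$. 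Strictness of the descent then follows from the tautology $T^n x \notin \mathfrak{N}(T^n x)_-$ (so the containment $\mathfrak{N}(T^n x)_- \subsetneq \mathfrak{N}(T^n x)$ is proper), using also that $T^n x \neq 0$ so all $\mathfrak{N}(T^n x)$ are nonzero members of $\mathfrak{N}$.

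This produces an infinite strictly decreasing sequence of elements of $\mathfrak{N}$, contradicting the well-ordering assumption, and completes the proof.

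The main potential obstacle is confirming the step $T^{n+1}x \in \mathfrak{N}(T^n x)_-$ from the hypothesis $T \in \mathcal{A}_-$; this is essentially immediate from the definition once one notes $T^n x \in \mathfrak{X}$ is exactly the kind of vector on which the defining condition of $\mathcal{A}_-$ is imposed. Everything else is bookkeeping with (\ref{granular}), so the proof should be quite short.
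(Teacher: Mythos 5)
Your proposal is correct and follows essentially the same route as the paper: argue by contradiction, form the chain $\mathfrak{N}(T^{n}x)$, and use $T\in\mathcal{A}_{-}$ together with the granularity equivalence (\ref{granular}) to get $\mathfrak{N}(T^{n+1}x)\subseteq\mathfrak{N}(T^{n}x)_{-}\subsetneq\mathfrak{N}(T^{n}x)$, contradicting well-ordering. If anything, your justification of the one-step inclusion (passing directly from $T^{n+1}x\in\mathfrak{N}(T^{n}x)_{-}$ to $\mathfrak{N}(T^{n+1}x)\subseteq\mathfrak{N}(T^{n}x)_{-}$ via (\ref{granular})) is a touch cleaner than the paper's intermediate containment $\mathfrak{N}(T^{n+1}x)\subseteq T(\mathfrak{N}(T^{n}x))$, but the argument is the same.
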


\begin{corollary}
\label{wellordcor}If~$\mathfrak{N}$ is well-ordered~then $\operatorname*{Rad}
\mathcal{A} =\mathcal{A}_{ -} .$
\end{corollary}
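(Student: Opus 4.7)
The plan is to simply chain together the three preceding results, since the corollary follows immediately and there is no new content to establish. First I would invoke Lemma \ref{wellordlocnil}: the hypothesis that $\mathfrak{N}$ is well-ordered gives that every $T \in \mathcal{A}_{-}$ is locally nilpotent. Then I would feed this into the preceding lemma (the one asserting that local nilpotence of every element of $\mathcal{A}_{-}$ forces $\operatorname*{Rad}\mathcal{A} = \mathcal{A}_{-}$) to conclude the equality.

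In slightly more detail, the forward inclusion $\operatorname*{Rad}\mathcal{A} \subseteq \mathcal{A}_{-}$ is already recorded in Lemma \ref{Rad1} and holds without the well-ordered hypothesis. For the reverse inclusion, given $T \in \mathcal{A}_{-}$ and $A \in \mathcal{A}$, one has $AT \in \mathcal{A}_{-}$, which is locally nilpotent by Lemma \ref{wellordlocnil}, so the Neumann-type series $1 + \sum_{n=1}^{\infty}(AT)^n$ defines an operator in $\mathcal{A}$ that inverts $1 - AT$. By Proposition \ref{Raddef} this places $T$ in $\operatorname*{Rad}\mathcal{A}$, yielding $\mathcal{A}_{-} \subseteq \operatorname*{Rad}\mathcal{A}$.

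There is no genuine obstacle here; the work has all been done in the two preceding lemmas. The only ``step'' is to observe that the hypothesis of the intermediate lemma is exactly the conclusion of Lemma \ref{wellordlocnil}, so the corollary is really a one-line deduction. I would therefore keep the write-up to a sentence or two citing the two lemmas.
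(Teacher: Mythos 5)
Your proposal is correct and matches the paper's intent exactly: the corollary is stated without proof precisely because it is the immediate composition of Lemma \ref{wellordlocnil} with the preceding lemma on local nilpotence, which is the one-line deduction you describe. The extra detail you give about the Neumann-type series simply restates the content of that intermediate lemma, so nothing further is needed.
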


The following result shows that for dual nests,~well-ordering is not essential
for the equality of the radical and the strictly triangular~ideal.

\begin{theorem}
\label{Donthmmod}Suppose that $\mathfrak{N}$ is a~nest of subspaces of a
vector space $\mathfrak{X}$ whose order type is~$\omega$, the first infinite
ordinal. Then~$\mathfrak{N}^{ \bot}$ is a~nest of subspaces of~$\mathfrak{X}^{
\prime}$, whose order type is anti-isomorphic to~$\omega$, and
$(\operatorname*{Alg} \mathfrak{N}^{ \bot})_{ -} =\operatorname*{Rad}
(\operatorname*{Alg} \mathfrak{N}^{ \bot}) .$

\begin{proof}
In view of Lemma \ref{wellorddual} it is sufficient to show that
$\mathcal{A}_{ -} =\operatorname*{Rad} \mathcal{A}$, where $\mathcal{A}
=\operatorname*{Alg} \mathfrak{N}^{ \bot} .$~

Let~$M_{0} =\{0\}$, and for each~$n >0$ let~$M_{n}$ denote the immediate
successor of~$M_{n -1}$ in $\mathfrak{N}$. Since the order type of
$\mathfrak{N}$ is $\omega,$ $\bigcup_{n =1}^{\infty}M_{n} =\mathfrak{X}$.

Suppose that~$T \in \mathcal{A}_{ -}$ and that~$\varphi \in \mathcal{M}_{n}^{
\bot} .$ Then~$T\varphi \in \mathfrak{N}^{ \bot}(\varphi)_{ -} \subset
\mathfrak{N}^{ \bot}(\varphi) \subseteq \mathcal{M}_{n}^{ \bot}$. Since
$\mathcal{M}_{n +1}^{\perp}$ is the immediate predecessor of $\mathcal{M}%
_{n}^{\perp}$in $\mathfrak{N}^{\perp}$, it follows that $T\varphi
\in \mathcal{M}_{n +1}$, and so $T(\mathcal{M}_{n}^{ \bot}) \subseteq
\mathcal{M}_{n +1}^{\perp} .$

Suppose that~$A \in \mathcal{A}$. Then $AT \in \mathcal{A}_{ -}$ and so
$AT(M_{n}^{\perp}) \subseteq M_{n +1}^{\perp}$ for each $n \geq0$ and so
$(AT)^{n}(\mathfrak{X}^{ \prime}) =(AT)^{n}(M_{0}^{\perp}) \subseteq
M_{n}^{\perp}$ for each $n \geq0$.

Let $S =$~ $1 +\sum_{n =1}^{\infty}(AT)^{n}$. The sum $S$ is well-defined as
an operator in $\mathcal{L}(\mathfrak{X}^{ \prime})$ because, for each $x
\in \mathfrak{X}$ and each $\varphi \in \mathfrak{X}^{ \prime}$, the
series~$\sum_{n =1}^{\infty}(AT)^{n})(\varphi)(x)$ has only finitely many
non-zero terms. (To see this note that $x \in M_{n^{\#}}$ for some $n^{\#}
\geq0$, and $((AT)^{n}\varphi)(x) =0$ if~\thinspace$n \geq n^{\#} .$)
Furthermore $S(1 -AT)\varphi(x) =(1 -AT)S\varphi(x) =\varphi(x)$, and so~$S
=(1 -AT)^{ -1}$. Finally, it is easy to check that $S(M_{n}^{\perp}) \subseteq
M_{n}^{\perp}$ for each $n \geq0$ and so $S \in \mathcal{A}$. So~$T
\in \operatorname*{Rad} \mathcal{A}$, and hence $\mathcal{A}_{ -}
\subseteq \operatorname*{Rad} \mathcal{A}$. It follows from~ Lemma~\ref{Rad1}
that~~$\mathcal{A}_{ -} =\operatorname*{Rad} \mathcal{A}$.
\end{proof}
\end{theorem}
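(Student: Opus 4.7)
The statement splits into two parts: first, that $\mathfrak{N}^{\bot}$ is a nest whose order type is anti-isomorphic to $\omega$; second, that $\mathcal{A}_{-} = \operatorname*{Rad}\mathcal{A}$, where $\mathcal{A} = \operatorname*{Alg}\mathfrak{N}^{\bot}$. The first part follows directly from Lemma \ref{wellorddual} (well-ordering of $\mathfrak{N}$ gives completeness of $\mathfrak{N}^{\bot}$) together with the order-reversing nature of $M \mapsto M^{\bot}$. For the second part, Lemma \ref{Rad1} already supplies $\operatorname*{Rad}\mathcal{A} \subseteq \mathcal{A}_{-}$, so I only need to prove the reverse inclusion.

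My plan is to enumerate $\mathfrak{N}$ as $\{0\} = M_{0} \subset M_{1} \subset M_{2} \subset \cdots$ with $\bigcup_{n} M_{n} = \mathfrak{X}$, so that $\mathfrak{N}^{\bot}$ has the form $\mathfrak{X}^{\prime} = M_{0}^{\bot} \supset M_{1}^{\bot} \supset M_{2}^{\bot} \supset \cdots \supset \{0\}$. The crucial structural step is a shift property: for each $T \in \mathcal{A}_{-}$ and each $n \geq 0$, $T(M_{n}^{\bot}) \subseteq M_{n+1}^{\bot}$. To see this I would take $\varphi \in M_{n}^{\bot}$; then $\mathfrak{N}^{\bot}(\varphi) = M_{k}^{\bot}$ for the smallest $k$ with $\varphi \in M_{k}^{\bot}$, which satisfies $k \geq n$, and the immediate predecessor of $M_{k}^{\bot}$ in the reversed nest is $M_{k+1}^{\bot}$. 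Since $T\varphi \in \mathfrak{N}^{\bot}(\varphi)_{-} = M_{k+1}^{\bot} \subseteq M_{n+1}^{\bot}$, the claim follows.

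Given any $A \in \mathcal{A}$, $AT$ again lies in $\mathcal{A}_{-}$, so iteration yields $(AT)^{n}(\mathfrak{X}^{\prime}) \subseteq M_{n}^{\bot}$ for every $n$. I would then form the Neumann-type series $S = 1 + \sum_{n \geq 1}(AT)^{n}$. Because $\bigcup_{n} M_{n} = \mathfrak{X}$, every $x \in \mathfrak{X}$ lies in some $M_{n^{\#}}$, and for $n \geq n^{\#}$ the functional $(AT)^{n}\varphi$ annihilates $x$. Hence $(S\varphi)(x)$ is a finite sum and $S$ is a well-defined linear endomorphism of $\mathfrak{X}^{\prime}$. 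Formal series algebra gives $S(1-AT) = (1-AT)S = 1$, and $M_{k}^{\bot}$-invariance of $S$ follows from $(AT)^{n}\varphi \in M_{k+n}^{\bot} \subseteq M_{k}^{\bot}$ whenever $\varphi \in M_{k}^{\bot}$. Thus $S \in \mathcal{A}$, and Proposition \ref{Raddef} confirms $T \in \operatorname*{Rad}\mathcal{A}$.

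The main obstacle is the shift property $T(M_{n}^{\bot}) \subseteq M_{n+1}^{\bot}$; everything afterward is mechanical bookkeeping. The delicate point is that for $\varphi \in M_{n}^{\bot}$ the space $\mathfrak{N}^{\bot}(\varphi)$ need not equal $M_{n}^{\bot}$ itself, since it may be a strictly smaller $M_{k}^{\bot}$ with $k > n$. One must therefore invoke the minimality of $\mathfrak{N}^{\bot}(\varphi)$ together with the fact that the immediate-predecessor operation in the reversed nest is exactly $M_{k}^{\bot} \mapsto M_{k+1}^{\bot}$, a correspondence that hinges on the order type of $\mathfrak{N}$ being precisely $\omega$.
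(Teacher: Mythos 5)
Your proposal is correct and follows essentially the same route as the paper's proof: enumerate $\mathfrak{N}$ as $M_{0}\subset M_{1}\subset\cdots$, prove the shift property $T(M_{n}^{\perp})\subseteq M_{n+1}^{\perp}$, iterate to get $(AT)^{n}(\mathfrak{X}^{\prime})\subseteq M_{n}^{\perp}$, and invert $1-AT$ by the pointwise-finite Neumann series, concluding via Lemma \ref{Rad1}. You are in fact a little more careful than the paper at the delicate point where $\mathfrak{N}^{\perp}(\varphi)$ may be a strictly smaller $M_{k}^{\perp}$ with $k>n$ (though note you want the \emph{largest} such index $k$, i.e.\ the minimal subspace, not the smallest $k$ as written).
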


\subsection{An example}

The nest~$\mathfrak{N}$ defined in Example \ref{c00(N)} satisfies the
conditions of Theorem~\ref{Donthmmod}, and so~$(\operatorname*{Alg}
\mathfrak{N}^{ \bot})_{ -} =\operatorname*{Rad} (\operatorname*{Alg}
\mathfrak{N}^{ \bot}) .$ Note that $\mathfrak{N}^{\perp}$ is not well-ordered.
It does, however, satisfy the ascending chain condition, i.e. each subset of
$\mathfrak{N}^{\perp}$ contains a maximal element.

\begin{definition}
Suppose that~$\mathfrak{X}_{1}$ and $\mathfrak{X}_{2}$ are vector spaces over
the same field~$\mathbb{F}$, and that $\mathfrak{N}_{k}$ is a nest of
subspaces of $\mathfrak{X}_{k}$ for~$k \in \{1 ,2\}$. The ordinal sum
$\mathfrak{N}_{1} \dotplus \mathfrak{N}_{2}$ is a nest of subspaces of
$\mathfrak{X} =\mathfrak{X}_{1} \oplus \mathfrak{X}_{2}$ defined by%
\[
\mathfrak{N}_{1} \dotplus \mathfrak{N}_{2} =\{N \oplus \{0\} :N \in
\mathfrak{N}_{1}\} \cup \{ \mathfrak{X}_{1} \oplus N :N \in \mathfrak{N}_{2}\}
\]

\end{definition}

Let $\mathcal{A} =\operatorname*{Alg} (\mathfrak{N}_{1} \dotplus
\mathfrak{N}_{2})$ and let $\mathcal{A}_{k} =\operatorname*{Alg}
\mathfrak{N}_{k}$ for~$k \in \{1 ,2\} .$~Every $T$ in $\mathcal{L} \left(
X\right) $ has an operator matrix,
\[
T =\left(
\begin{array}
[c]{cc}%
A_{1} & B\\
C & A_{2}%
\end{array}
\right)
\]
relative to the decomposition $\mathfrak{X} =\mathfrak{X}_{1} \oplus
\mathfrak{X}_{2}$. It is easy to check that%
\begin{equation}
T \in \mathcal{A}\  \text{if and only if}\ A_{k}\text{ } \in \mathcal{A}%
_{k}\  \text{for}\ k \in \{1 ,2\} \  \text{and}\ C =0 ,\  \text{and}\label{Ordsum1}%
\end{equation}%
\begin{equation}
T \in \mathcal{A}_{ -}\  \text{if and only if}\ A_{k}\text{ } \in(\mathcal{A}%
_{k})_{ -}\  \text{for}\ k \in \{1 ,2\} \  \text{and}\ C =0.\label{Ordsum2}%
\end{equation}

\begin{lemma}
\label{ordsumrad}With the above notation and~$C =0 ,$
\begin{equation}
T \in \operatorname*{Rad} \mathcal{A}\  \text{if and only if}\ A_{k}\text{ }
\in \operatorname*{Rad} \mathcal{A}_{k}\  \text{for}\ k \in1 ,2\} ,\  \text{and}%
\label{Ordsum3}%
\end{equation}%
\begin{equation}
\operatorname*{Rad} \mathcal{A} =\mathcal{A}_{ -}\  \text{if and only
if}\  \operatorname*{Rad} \text{ }\mathcal{A}_{k} =(\mathcal{A}_{k})_{
-}\  \text{for}\ k \in \{1 ,2\} \label{Ordsum4}%
\end{equation}

\begin{proof}
A simple matrix computation shows that if~$\left(
\begin{array}
[c]{cc}%
D & E\\
0 & F
\end{array}
\right)  =\left(
\begin{array}
[c]{cc}%
A_{1} & B\\
0 & A_{2}%
\end{array}
\right) ^{ -1}$ if and only if~$D =A_{1}^{ -1}$, $F =A_{2}^{ -1}$ and $E =
-A_{1}^{ -1}BA_{2}^{ -1} .$ So $\left(
\begin{array}
[c]{cc}%
A_{1} & B\\
0 & A_{2}%
\end{array}
\right) ^{ -1} \in \mathcal{A}$ if and only if~$A_{1}^{ -1} \in \mathcal{A}%
_{1}\ $ and $A_{2}^{ -1~} \in \mathcal{A}_{2}$. Statement (\ref{Ordsum3}) is
now obvious. Statement (\ref{Ordsum4}) follows from (\ref{Ordsum2})and
(\ref{Ordsum3}).
\end{proof}
\end{lemma}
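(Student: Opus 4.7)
The plan is to reduce everything to the block upper-triangular matrix arithmetic already displayed in the sketch, invoking Proposition~\ref{Raddef} as the workhorse characterization of the radical. Since every element of $\mathcal{A}$ has the block form $\bigl(\begin{smallmatrix} A_1 & B \\ 0 & A_2 \end{smallmatrix}\bigr)$ by (\ref{Ordsum1}), each product $1 - A'T$ arising in Proposition~\ref{Raddef} is again block upper-triangular with diagonal entries $1 - A_1'A_1$ and $1 - A_2'A_2$, so the inversion formula from the sketch tells us exactly when $(1-A'T)^{-1}$ belongs to $\mathcal{A}$: precisely when both $(1-A_k'A_k)^{-1}$ lie in $\mathcal{A}_k$.

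For (\ref{Ordsum3}), the ``if'' direction is then immediate: if $A_k \in \operatorname*{Rad}\mathcal{A}_k$ for $k=1,2$, then for every $A' = \bigl(\begin{smallmatrix} A_1' & B' \\ 0 & A_2' \end{smallmatrix}\bigr) \in \mathcal{A}$ the diagonal blocks $1 - A_k'A_k$ are invertible in $\mathcal{A}_k$, hence $(1-A'T)^{-1} \in \mathcal{A}$ by the formula, and Proposition~\ref{Raddef} gives $T \in \operatorname*{Rad}\mathcal{A}$. For the ``only if'' direction, given $A_1' \in \mathcal{A}_1$ I would test with the block-diagonal lift $A' = \bigl(\begin{smallmatrix} A_1' & 0 \\ 0 & 0 \end{smallmatrix}\bigr)$; then $1 - A'T$ has diagonal $(1 - A_1'A_1,\,1)$, so its inverse lies in $\mathcal{A}$ iff $(1 - A_1'A_1)^{-1} \in \mathcal{A}_1$. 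This forces $A_1 \in \operatorname*{Rad}\mathcal{A}_1$, and the analogous test $A' = \bigl(\begin{smallmatrix} 0 & 0 \\ 0 & A_2' \end{smallmatrix}\bigr)$ handles $A_2$.

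For (\ref{Ordsum4}), I would combine (\ref{Ordsum2}) with (\ref{Ordsum3}) diagonally. If $\operatorname*{Rad}\mathcal{A}_k = (\mathcal{A}_k)_-$ for $k = 1,2$, then $T \in \operatorname*{Rad}\mathcal{A}$ iff each $A_k \in \operatorname*{Rad}\mathcal{A}_k = (\mathcal{A}_k)_-$ iff $T \in \mathcal{A}_-$. Conversely, assume $\operatorname*{Rad}\mathcal{A} = \mathcal{A}_-$; Lemma~\ref{Rad1} already gives $\operatorname*{Rad}\mathcal{A}_k \subseteq (\mathcal{A}_k)_-$, and for the reverse inclusion I would take $A_1 \in (\mathcal{A}_1)_-$, lift to $T = \bigl(\begin{smallmatrix} A_1 & 0 \\ 0 & 0 \end{smallmatrix}\bigr) \in \mathcal{A}_- = \operatorname*{Rad}\mathcal{A}$ via (\ref{Ordsum2}), and then read off $A_1 \in \operatorname*{Rad}\mathcal{A}_1$ from (\ref{Ordsum3}); symmetrically for $A_2$.

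There is no real obstacle here — the lemma is essentially a bookkeeping exercise in $2 \times 2$ block upper-triangular inversion, and the only mild subtlety is noticing that the ``$C = 0$'' hypothesis in the statement is automatic for elements of $\mathcal{A}$ by (\ref{Ordsum1}), so no case analysis for the $C$-entry is needed.
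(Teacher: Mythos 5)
Your proposal is correct and follows essentially the same route as the paper: reduce invertibility in $\mathcal{A}$ to invertibility of the diagonal blocks in $\mathcal{A}_1$ and $\mathcal{A}_2$ via the block upper-triangular inversion formula, apply Proposition~\ref{Raddef}, and deduce (\ref{Ordsum4}) from (\ref{Ordsum2}) and (\ref{Ordsum3}). You merely make explicit the steps the paper calls ``obvious,'' in particular the block-diagonal test elements for the ``only if'' directions, which is a helpful elaboration rather than a different argument.
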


\begin{example}
Let $\mathfrak{X} =\mathfrak{Y} \oplus \mathfrak{Y}$, where $\mathfrak{Y}$ is
the vector space of all~$\mathbb{F}$-valued sequences. Let
\[
\mathfrak{N}_{1} =\{ \mathfrak{Y} ,M_{1}^{ \bot} ,M_{2}^{ \bot} ,M_{3}^{ \bot}
, \cdots,\{0\} \} \  \  \
\]
where~$M_{n}^{ \bot} =\{ \varphi \in \mathfrak{Y} :\  \operatorname*{supp}
\varphi \subseteq \{n +1 ,n +2 ,n +3 , \cdots \} ,$as in Example \ref{c00(N)},
and let~
\[
\mathfrak{N}_{2} =\{ \{0\} ,(M_{1}^{\#})^{ \bot} ,(M_{2}^{\#})^{ \bot}
,(M_{3}^{\#})^{ \bot} , \cdots, ,\mathfrak{Y}\}
\]
where $(M_{n}^{\#})^{ \bot} =\{ \varphi \in \mathfrak{Y} :\  \operatorname*{supp}
\varphi \subseteq \{1 ,2 , \cdots,n\} \} ,$ as in Example \ref{c00(N)1}.

Note that~$\mathfrak{N}_{1} =\mathfrak{N}^{ \bot}$, where $\mathfrak{N}$ is as
defined in~Example \ref{c00(N)}. Since~$\mathfrak{N}_{1}$ is well-ordered with
order type $\omega$, it follows from Theorem \ref{dualrad} that
$\operatorname*{Rad} \mathcal{A}_{1} =(\mathcal{A}_{1})_{ -} .$ Note also that
$\mathfrak{N}_{2}$ is well-ordered, i.e. it satisfies the descending chain
condition.~So by Corollary \ref{wellordcor} $\operatorname*{Rad}
\mathcal{A}_{2} =(\mathcal{A}_{2})_{ -}$. So by Lemma \ref{ordsumrad}
$\operatorname*{Rad} (\mathfrak{N}_{1} \dotplus \mathfrak{N}_{2})
=(\mathfrak{N}_{1} \dotplus \mathfrak{N}_{2})_{ -}$.

But $\mathfrak{N}_{1} \dotplus \mathfrak{N}_{2})$~satisfies neither the
ascending chain condition nor the ascending chain condition. Its order type
is~$1 +\omega^{ \ast} +\omega+1$, i.e. the order type of $\{ -\infty \}
\cup \mathbb{Z} \cup \{ \infty \}$, where $\mathbb{Z}$ denote the set of integers,
and it contains both strictly decreasing and strictly increasing infinite
sequences of subspaces.
\end{example}

\end{document}